\theoremstyle{plain}
\newtheorem{thm}{Theorem} 
\newtheorem{lem}[thm]{Lemma}
\newtheorem{pr}[thm]{Proposition}
\newtheorem{cor}[thm]{Corollary}
\theoremstyle{remark}
\newtheorem*{unremark}{Remark}
\definecolor{darkgreen}{rgb}{0.0, 0.8, 0.08}
\def\E{\mathbb{E}}
\def\J{\mathcal{J}}
\def\MM{\mathcal{M}}
\def\P{\mathbb{P}}
\def\Z{\mathbb{Z}}
\def\R{\mathbb{R}}
\def\ee{\varepsilon}
\def\la{\lambda}
\def\xx{{\bf x}}
\def\yy{{\bf y}}
\def\XX{{\bf X}}
\def\Mt{\tilde{M}}
\def\mN{\mathcal{N}}
\def\Var{{\rm Var} \,}
\def\Cov{{\rm Cov} \,}
\def\Cox{\hfill \Box}
\def\disp{\displaystyle}
\def\one{{\bf 1}}
\def\dilog{{\rm dilog}\,}
\def\region{{\mathcal R}}
\def\regiona{{\mathcal R}_1}
\def\regionb{{\mathcal R}_2}
\def\regionc{{\mathcal R}_3}
\def\phihat{{\ov{\phi}}}
\def\Sbar{{\overline{S}}}
\def\Tbar{{\overline{T}}}
\newcommand\ov[1]{\widehat{#1}}
\date{January 16, 2019}
\title{Counting partitions inside a rectangle}
\author{Stephen Melczer$^\dagger$, Greta Panova$^{\dagger,*}$, and Robin Pemantle$^\dagger$}
\begin{document}

\begin{titlepage}

\maketitle 

\abstract{We consider the number of partitions of $n$ whose Young diagrams fit inside an $m \times \ell$ rectangle; equivalently, we study the coefficients of the $q$-binomial coefficient $\binom{m+\ell}{m}_q$.  We obtain sharp asymptotics throughout the regime $\ell = \Theta (m)$ and $n = \Theta (m^2)$. Previously, sharp asymptotics were derived by Tak{\'a}cs~\cite{Takacs1986} only in the regime where $|n - \ell m /2| = O(\sqrt{\ell m (\ell + m)})$ using a local central limit theorem.  Our approach is to solve a related large deviation problem: we describe the tilted measure that produces configurations whose bounding rectangle has the given aspect ratio and is filled to the given proportion. Our results are sufficiently sharp to yield the first asymptotic estimates on the consecutive differences of these numbers when $n$ is increased by one and $m, \ell$ remain the same, hence significantly refining Sylvester's unimodality theorem and giving effective asymptotic estimates for related Kronecker and plethysm coefficients from representation theory.} 
\vfill
 
\noindent{\sc AMS subject classification:} 05A15, 60C05, 60F05, 60F10;

\noindent{\sc Key words and phrases:} partitions, q-binomial coefficients, 
large deviations, local CLT.

\vspace{0.4in}

{\sc 
\noindent $\dagger$University of Pennsylvania \\
Department of Mathematics \\
209 South 33rd Street \\
Philadelphia, PA 19104 \\ 
}
{\tt \{smelczer,pemantle\}@math.upenn.edu} \vspace{0.2in}

{\sc 
\noindent $*$University of Southern California \\
Los Angeles, CA 90089 \\
}
{\tt gpanova@usc.edu} \vspace{0.2in}

\noindent{Melczer} partially supported by an NSERC postdoctoral fellowship and NSF grant DMS-1612674\\
Panova partially supported by NSF grant DMS-1500423, on leave 2018--2019 at the University of Southern California.\\
Pemantle partially supported by NSF grant DMS-1612674.

\noindent

\end{titlepage}

\section{Introduction}

A partition $\la$ of $n$ is a sequence of weakly decreasing nonnegative integers $\la=(\la_1\geq \la_2\geq \ldots)$ whose sum $|\la|=\la_1+\la_2+\cdots$ is equal to $n$. The study of integer partitions is a classic subject with applications ranging from number theory to representation theory and combinatorics, and integer partitions with various restrictions on properties, such as part sizes or number of parts, occupy the field of partition theory~\cite{Andrews1976}.  The generating functions of integer partitions play a role in number theory and the theory of modular forms. In representation theory, integer partitions index the conjugacy classes and irreducible representations of the symmetric group $S_n$; they are also the signatures of the irreducible polynomial representation of $GL_n$ and give a basis for the ring of symmetric functions.  More recently, partitions have appeared in the study of interacting particle systems and other statistical mechanics models.

The number of partitions of $n$, typically denoted by $p(n)$ but here unconventionally\footnote{We use the notation $N_n$ to distinguish scenarios of probability with those of enumeration, both of which occur in the present manuscript.} by $N_n$, was implicitly determined by Euler via the generating function 
\[ \sum_{n=0}^{\infty} N_n q^n = \prod_{i=1}^\infty \frac{1}{1-q^i} \, . \]
There is no exact explicit formula for the numbers $N_n$.  The asymptotic formula
\begin{align}
\label{eq:hardy-ram}
N_n:=\#\{ \lambda \vdash n \} \sim \frac{1}{4n \sqrt{3}} \exp\left( \pi \sqrt{\frac{2n}{3}} \right)\,,
\end{align}
obtained by Hardy and Ramanujan~\cite{HardyRamanujan1918}, is considered to be the beginning of the use of complex variable methods for asymptotic enumeration of partitions (the so-called circle method).

Our goal is to obtain asymptotic formulas similar to~\eqref{eq:hardy-ram} for the number of partitions $\la$ of $n$ whose Young diagram fits inside an $m \times \ell$ rectangle, denoted 
\[ N_n(\ell,m):=\# \{ \la \vdash n: \la_1 \, \leq \, \ell, \quad \text{length}(\la) \, \leq \, m\}\, .\] 
These numbers are also the coefficients in the expansion of the $q$-binomial coefficient 
\[
\binom{\ell +m}{m}_q = \frac{ \prod_{i=1}^{\ell+m} (1-q^i)} {\prod_{i=1}^\ell (1-q^i) \prod_{i=1}^m (1-q^i)} = \sum_{n=0}^{\ell m} N_n(\ell,m)q^n \, . 
\]

The $q$-binomial coefficients are themselves central to enumerative and algebraic combinatorics. They are the generating functions for lattice paths restricted to rectangles and taking only north and east steps under the area statistic, given by the parameter $n$. They are also the number of $\ell$-dimensional subspaces of $\mathbb{F}_q^{\ell+m}$ and appear in many other generating functions as the $q$-analogue generalization of the ubiquitous  binomial coefficients. Notably, the numbers $N_n(\ell,m)$ form a symmetric unimodal sequence 
\[ 1 = N_0(\ell,m)\leq N_1(\ell,m) \leq \cdots \leq N_{\lfloor m\ell/2\rfloor}(\ell,m) \geq \cdots \geq N_{m\ell}(\ell,m)=1, \]
a fact conjectured by Cayley in 1856 and proven by Sylvester in 1878 via the representation theory of $sl_2$~\cite{Sylvester1878}.  One hundred forty years later, no previous asymptotic methods have been able to prove this unimodality.  

\subsection*{Asymptotics of $N_n(\ell,m)$}
Our first result is an asymptotic formula for $N_n (\ell,m)$ in 
the regime $\ell/m \to A$ and $n/m^2 \to B$ for any fixed $A > B > 0$. 
This is the regime in which a limit shape of the partitions exists: 
$\ell/m \to A$ implies the aspect ratio has a limit and 
$n/m^2 \to B \in (0,A)$ implies the portion of the $m \times \ell$ 
rectangle that is filled approaches a value that is neither zero nor one.  
By ``asymptotic formula'' we mean a formula giving $N_n(\ell,m)$ 
up to a factor of $1 + o(1)$; such asymptotic equivalence is denoted 
with the symbol $\sim$.  By the obvious symmetry $N_{n}(\ell,m) = 
N_{m\ell -n}(\ell,m)$ it suffices to consider only the case $A \geq 2B > 0$.

To state our results, given $A \geq 2B > 0$ we define three quantities 
$c, d$ and $\Delta$.  The quantities $c$ and $d$ are the unique positive 
real solutions (see Lemma~\ref{lem:uniquecd}) to the simultaneous equations 
\begin{align} 
A &= \int_0^1 \frac{1}{1-e^{-c-dt}}dt - 1 = \frac{1}{d} 
   \log \left( \frac{e^{c+d}-1}{e^c-1}\right) - 1  \, , 
   \label{eq:integrals A} \\
B &= \int_0^1 \frac{t}{1-e^{-c-dt}}dt - \frac{1}{2} 
   = \frac{d\log(1-e^{-c-d})+\dilog(1-e^{-c})-\dilog(1-e^{-c-d})}{d^2}
   \label{eq:integrals B} \, ,
\end{align}
where we recall the dilogarithm function 
\[ \dilog(x) = \int_1^x \frac{\log t}{1-t}dt 
   = \sum_{k=1}^{\infty} \frac{(1-x)^k}{k^2} \] 
for $|x-1|<1$.  The quantity $\Delta$, which will be seen to be 
strictly positive, is defined by 
\begin{equation} 
\label{eq:Delta}
\Delta = 
\frac{2Be^c(e^d-1) + 2A(e^c-1)-1}{d^2(e^{d+c}-1)(e^c-1)} - \frac{A^2}{d^2}\, .
\end{equation}

\begin{thm} \label{th:main}
Given $m, \ell$ and $n$, let $A := \ell/m$ and $B := n/m^2$ 
and define $c, d$ and $\Delta$ as above.  Let $K$ be any 
compact subset of $\{ (x,y) : x \geq 2y > 0 \}$.  
As $m \to \infty$ with $\ell$ and $n$ varying so that 
$(A,B)$ remains in $K$, 
\begin{equation} \label{eq:N}
N_n(\ell,m) \sim 
   \frac{e^{m \left[cA + 2dB - \log(1-e^{-c-d}) \right ]}}
   {2\pi m^2 \sqrt{\Delta \, (1-e^{-c})\, (1-e^{-c-d})}} \, ,
\end{equation}
where $c$ and $d$ vary in a Lipschitz manner with $(A,B) \in K$.
\end{thm}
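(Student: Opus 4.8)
The plan is to realize $N_n(\ell,m)$ as the probability of a two-dimensional large-deviation event for a natural product measure, and then obtain the sharp constant via a local central limit theorem. Recall that a partition inside the $m\times\ell$ rectangle is equivalent to a lattice path from $(0,0)$ to $(\ell,m)$, or equivalently to a choice, for each of the $m$ rows, of a part size $\la_i\in\{0,1,\dots,\ell\}$ subject to the monotonicity constraint $\la_1\ge\la_2\ge\cdots\ge\la_m$. Rather than work with the ordered variables directly, I would pass to the ``staircase'' / Boltzmann description: introduce a two-parameter tilted (Boltzmann) measure on partitions in the rectangle in which each cell contributes independently, so that $\P(\la)\propto x^{|\la|}$ and the row-lengths become a Markovian sequence; concretely one tilts the uniform measure on the $m$ independent geometric-type coordinates by two Lagrange parameters, one conjugate to the total number of columns used (controlling the aspect ratio $A$) and one conjugate to the area $n$ (controlling the fill proportion $B$). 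The point of the two parameters $c$ and $d$ from \eqref{eq:integrals A}--\eqref{eq:integrals B} is that they are exactly the tilt parameters for which the tilted measure puts the bulk of its mass on configurations with $\la_1\approx\ell$ and $|\la|\approx n$: equations \eqref{eq:integrals A} and \eqref{eq:integrals B} are the first-order (mean) conditions $\E[\text{width}]/m\to A$ and $\E[\text{area}]/m^2\to B$ after the continuum limit $t=i/m$, which is why the integrands $1/(1-e^{-c-dt})$ and $t/(1-e^{-c-dt})$ appear.

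The key steps, in order, are: (1) \emph{Set up the tilted measure and verify the mean equations.} Under the tilt the $i$-th row length has an explicit (truncated-geometric) law depending on $i$ through $e^{-c-d\,i/m}$; summing the Euler--Maclaurin approximations of the means of width and area over $i=1,\dots,m$ gives precisely \eqref{eq:integrals A} and \eqref{eq:integrals B}, and existence/uniqueness of $(c,d)$ is Lemma~\ref{lem:uniquecd}. (2) \emph{Compute the log-partition function.} The normalizing constant of the tilted measure is a product over rows of elementary geometric sums; its logarithm, again via Euler--Maclaurin, is $m\,[\,\text{something}\,] + O(\log m)$, and a Legendre-transform bookkeeping converts it into the exponential rate $m\,[cA+2dB-\log(1-e^{-c-d})]$ that appears in \eqref{eq:N}. (3) \emph{Local CLT for the pair (width, area).} One shows the centered and scaled pair $\big(\text{width}-\E\text{width},\ \text{area}-\E\text{area}\big)$ satisfies a local central limit theorem on the appropriate sublattice, with covariance matrix whose determinant, after the continuum limit, is exactly $m^6$ times the quantity $\Delta$ in \eqref{eq:Delta} (the two diagonal entries being the variances $\Var(\text{width})$, $\Var(\text{area})$ and the off-diagonal the covariance; $\Delta>0$ is the nondegeneracy of this Gaussian, proven separately). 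Evaluating the LCLT density at the target point $(\ell,n)$ produces the factor $1/(2\pi m^2\sqrt{\Delta\,(1-e^{-c})(1-e^{-c-d})})$, the two extra factors $(1-e^{-c})$ and $(1-e^{-c-d})$ coming from the per-row normalizations at the two ends $i=1$ and $i=m$ (equivalently, boundary corrections in Euler--Maclaurin). (4) \emph{Assemble and check uniformity.} Combining (2) and (3) via $N_n(\ell,m) = Z \cdot \P_{\text{tilt}}(\text{width}=\ell,\ \text{area}=n)$ gives \eqref{eq:N}; uniformity over the compact set $K$ follows because all Euler--Maclaurin error terms and the LCLT error are uniform in $(A,B)\in K$, and the implicit-function theorem applied to \eqref{eq:integrals A}--\eqref{eq:integrals B} gives the Lipschitz dependence of $(c,d)$ on $(A,B)$.

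I expect the main obstacle to be step (3), the \emph{two-dimensional local} central limit theorem with an explicit constant. Getting a CLT (convergence of the rescaled pair to a Gaussian) is routine from the row-independence structure, but upgrading to a \emph{local} statement — controlling the characteristic function uniformly away from the origin on the dual lattice, handling the lattice (arithmetic) structure of the width and area statistics so that the correct sublattice and its covolume enter, and making every estimate uniform as $(A,B)$ ranges over $K$ while the single-row distributions degenerate near the corners of the rectangle — is the delicate part. A secondary technical point is the careful Euler--Maclaurin accounting in steps (2)--(3): the exponential rate in \eqref{eq:N} is an $O(\log m)$-level statement inside the exponent is \emph{not} enough, so the subexponential prefactor demands that the $\sum_i(\cdots)$ approximations be carried out to the constant order with the boundary terms at $i=1$ and $i=m$ tracked precisely, which is exactly where $(1-e^{-c})(1-e^{-c-d})$ is born. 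Once these are in hand, matching the formula \eqref{eq:N} is bookkeeping.
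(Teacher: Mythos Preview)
Your proposal is essentially the paper's proof: independent tilted geometrics, mean equations fixing $(c,d)$, a two-dimensional LCLT for the pair $(\text{width},\text{area})$, and Euler--Maclaurin to pass from the discrete parameters $(c_m,d_m)$ to $(c,d)$, with the boundary terms of $L_m=\sum_j\log p_j$ at $j=0$ and $j=m$ producing the factors $(1-e^{-c})(1-e^{-c-d})$ under the square root exactly as you say. A few details to tighten: it is the \emph{gaps} $X_j=\lambda_j-\lambda_{j+1}$ (with $\lambda_0:=\ell$, $\lambda_{m+1}:=0$, so $j=0,\dots,m$) that are made independent geometrics with parameter $1-e^{-c-dj/m}$, not the row lengths; these geometrics are \emph{not} truncated (the constraint $\lambda_1\le\ell$ is imposed by conditioning on $\sum_j X_j=\ell$, which is why the measure is supported on all of $\Z_{\ge0}^{m+1}$ and the LCLT is clean); and the covariance matrix of $(S_m,T_m)=(\sum X_j,\sum jX_j)$ has determinant $m^4\Delta$, not $m^6\Delta$.
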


\begin{unremark}
In the special case $B = A/2$, the parameters take on the elementary values
$$ d = 0 \, , \qquad c = \log \left(\frac{A+1}{A}\right) \, , \qquad 
   \mbox{ and } \qquad \Delta = \frac{A^2 (A+1)^2}{12} \, . $$
In this case we understand the exponent and leading constant to be 
their limits as $d \to 0$, giving
$$ N_{Am^2/2} (Am,m) \sim \frac{\sqrt{3}}{A\pi m^2} 
   \left [ \frac{(A+1)^{A+1}}{A^A} \right ]^m \, . $$

The special case when $A \to \infty$, so that $N_n(\ell,m) = N_n(m)$ 
and the restriction on partition sizes is removed corresponds to 
$c=0$ and $d$ is a solution to an  explicit equation given in 
Lemma~\ref{lem:uniquecd}.  In this case the result matches the 
one obtained first by Szekeres~\cite{Szekeres1953} using complex analysis,
then by Canfield~\cite{Canfield1997} using a recursion, and most recently
by Romik~\cite{Romik} using probabilistic methods based on 
Fristedt's ensemble~\cite{Fristedt1993}.  These works and others
are further explained in Section~\ref{sec:history}.
\end{unremark}

\subsection*{Unimodality}

Our second result gives an asymptotic estimate of the consecutive differences 
of $N_n$.  In fact our motivation for deriving more accurate asymptotics
for $N_n (\ell, m)$ was to be able to analyze the sequence 
$\{ N_{n+1} (\ell,m) - N_n(\ell , m) : n \geq 1 \}$.
Sylvester's proof of unimodality of $N_n (\ell,m)$ in 
$n$~\cite{Sylvester1878}, and most subsequent 
proofs~\cite{Stanley1983,Stanley1985,Proctor1982}, are algebraic, 
viewing $N_n(\ell,m)$ as dimensions of certain vector spaces, or 
their differences as multiplicities of representations. 
While there are also purely combinatorial proofs of unimodality, 
notably O'Hara's~\cite{OHara1990} and the more abstract one 
in~\cite{PouzetRosenberg}, they do not give the desired symmetric 
chain decomposition of the subposet of the partition lattice. 
These methods do not give ways of estimating the asymptotic size 
of the coefficients or their difference.  It is now known that 
$N_n (\ell , m)$ is strictly unimodal~\cite{PakPanova2013}, 
and the following lower bound on the consecutive difference was 
obtained in~\cite[Theorem 1.2]{PakPanova2017} using a connection 
between integer partitions and Kronecker coefficients: 
\begin{equation} \label{eq:pak-panova}
N_n(\ell,m)-N_{n-1}(\ell,m) \geq 0.004 \frac{2^{\sqrt{s}}}{s^{9/4}},
\end{equation}
where $n\leq\ell m/2$ and $s=\min\{2n,\ell^2,m^2\}$.  In particular, 
when $\ell=m$ we have $s=2n$.  

Any sharp asymptotics of the difference appears to be out of reach of 
the algebraic methods in this previous series of papers.  Refining
Theorem~\ref{th:main} we are able to obtain the following estimate.

\begin{thm} \label{th:difference}
Given $m, \ell$ and $n$, let $A := \ell/m$ and $B := n/m^2$ and define 
$d$ as above. Suppose $m, \ell$ and $n$ go to infinity so that $(A,B)$ 
remains in a compact subset $K$ of $\{ (x,y) : x \geq 2y > 0 \}$ and 
$$ m^{-1} \, |n-lm/2| \rightarrow \infty . $$
Then
$$ N_{n+1}(\ell,m) - N_n(\ell,m) \sim \frac{d}{m}N_n(\ell,m). $$
\end{thm}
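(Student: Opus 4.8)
The plan is to reduce the consecutive difference to the ratio $N_{n+1}(\ell,m)/N_n(\ell,m)$ and to extract that ratio from the saddle-point/tilted-measure analysis already behind Theorem~\ref{th:main}. Since $N_n(\ell,m)\to\infty$ it suffices to prove
\[
 \frac{N_{n+1}(\ell,m)}{N_n(\ell,m)}=1+\frac dm+o\!\left(\frac dm\right),
\]
for then $N_{n+1}-N_n=\big(\tfrac dm+o(\tfrac dm)\big)N_n$. Writing $P(q):=\binom{\ell+m}{m}_q$ and extracting coefficients over a small positively oriented circle,
\begin{equation}\label{eq:diffint}
 N_{n+1}(\ell,m)-N_n(\ell,m)=\frac1{2\pi i}\oint P(q)\,q^{-n-1}\,\frac{1-q}{q}\,dq ,
\end{equation}
and the companion integral $\frac1{2\pi i}\oint P(q)q^{-n-1}\,dq=N_n(\ell,m)$ is exactly the object that Theorem~\ref{th:main} controls. (Equivalently, in probabilistic terms: with $\P$ the tilted law tuned to $(A,B)$ and $q_*$ its area-tilt, one has $N_{n+1}/N_n=q_*^{-1}\cdot\P(\mathrm{area}=n{+}1\mid \text{rectangle}=\ell\times m)/\P(\mathrm{area}=n\mid\text{rectangle}=\ell\times m)$, and the last ratio tends to $1$ because the area has variance of order $m^3$.)

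Two facts drive \eqref{eq:diffint}. First, $P(q)q^{-n-1}$ concentrates on a short arc of angular width $\Theta(m^{-3/2})$ about a real positive saddle $q_*$, the second logarithmic derivative of $P(q)q^{-n-1}$ there being the variance $\sigma^2\asymp m^3$ of the area under the area-tilted law --- exactly the picture used for Theorem~\ref{th:main}. Second, on that arc the extra factor $\tfrac{1-q}{q}=q^{-1}-1$ is essentially the constant $q_*^{-1}-1$, and the relation
\[
 m\log(1/q_*)\longrightarrow d
\]
identifies this constant as $\tfrac dm(1+o(1))$. This identity is the one genuinely new computation: it follows from the saddle equation $q\frac{d}{dq}\log P(q)\big|_{q_*}=n$ together with \eqref{eq:integrals A}--\eqref{eq:integrals B}. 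Invariantly it is the envelope identity $\partial_B\phi=d$ for the exponent $\phi(A,B)=cA+2dB-\log(1-e^{-c-d})$: this $\phi$ is the Legendre-type transform of $\mathcal L(c,d):=\int_0^1\log(1-e^{-c-dt})\,dt$, since $\partial_c\mathcal L=A$ and $\partial_d\mathcal L=B$ are precisely \eqref{eq:integrals A}--\eqref{eq:integrals B} while $\mathcal L=\log(1-e^{-c-d})-dB$, whence $cA+dB-\mathcal L=\phi$ and $\partial_B\phi=d$. As a sanity check, on $\{B=A/2\}$ the equations force $d=0$ and $q_*\to1$, the fixed point of $q\leftrightarrow q^{-1}$ dictated by the symmetry $N_n=N_{m\ell-n}$.

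Granting this, I would finish by the usual saddle split of \eqref{eq:diffint}. Writing $q=q_*e^{i\theta}$ on the dominant arc, expand $q^{-1}-1=(q_*^{-1}-1)-iq_*^{-1}\theta-\tfrac12 q_*^{-1}\theta^2+\cdots$: the linear term is odd and integrates to zero against the Gaussian factor $e^{-\sigma^2\theta^2/2}$, and the $\theta^2$ term contributes a correction of relative size $O\big((m^2d)^{-1}\big)$, so the dominant-arc integral equals $\big(\tfrac dm+o(\tfrac dm)\big)$ times the dominant-arc integral of $P(q)q^{-n-1}$; the complementary arc is $O(e^{-\epsilon m})$ relative to the leading term $e^{m\phi}/m^2$ (with $|1-q|\le 2$ a harmless bounded factor), hence $o(\tfrac dm N_n)$. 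This yields $N_{n+1}-N_n\sim\tfrac dm N_n$. The hypothesis $m^{-1}|n-\ell m/2|\to\infty$ enters exactly here: it equals $m|B-\tfrac A2|$, and near $\{B=A/2\}$ one has $B-\tfrac A2\asymp d$, so it is equivalent to $md\to\infty$ --- precisely what forces the unavoidable \emph{relative} $O(m^{-2})$ corrections (the $\theta^2$ term above; the $O(m^{-1})$ slack in $m(q_*^{-1}-1)=d+O(m^{-1})$; and the shift $B\mapsto B+m^{-2}$ of the sub-exponential prefactor $\big(2\pi m^2\sqrt{\Delta(1-e^{-c})(1-e^{-c-d})}\big)^{-1}$) to be $o(\tfrac dm)$, uniformly as $(A,B)$ ranges over $K$ and approaches $\{B=A/2\}$.

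The hard part is this last uniformity. One cannot simply subtract two copies of Theorem~\ref{th:main}: its remainder is only $o(1)$, and the difference of two $o(1)$'s would swamp $\tfrac dm$. Instead one must re-run the saddle estimate for $N_n(\ell,m)$ carrying the factor $\tfrac{1-q}{q}$ and with every error term quantified to relative precision $o(\tfrac dm)$ --- equivalently, prove a smooth, derivative-controlled local limit theorem for the area under the tilted law, so that the densities at $n$ and $n+1$ agree to relative order $o(\tfrac dm)$. A crude local CLT gives each density only to relative order $m^{-1/2}$ (Berry--Esseen scale with $\Theta(m)$ effective summands), which is far too weak; the saving grace is that these errors nearly cancel in the consecutive ratio, leaving $O(m^{-2})$, and making that cancellation quantitative and uniform --- in particular near $d=0$ --- is where the real labor lies.
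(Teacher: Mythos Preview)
Your outline is correct and is, at the level of ideas, the same proof as the paper's: the paper also writes $N_{n+1}/N_n = e^{d_m/m}\cdot p_m(\ell,n{+}1)/p_m(\ell,n)$ via the tilted-measure identity~\eqref{eq:P_m} (your ``$q_*^{-1}$ times a ratio of local probabilities''), extracts the main term $e^{d_m/m}-1=\tfrac dm+O(m^{-2})$, and then controls the remaining pieces to relative order $O(m^{-2})$. The ``derivative-controlled local limit theorem'' you say is needed is exactly Corollary~\ref{cor:lclt_error}, which upgrades the $o(m^{-2})$ error in Lemma~\ref{lem:lclt} to an $O(m^{-4})$ bound on the \emph{consecutive difference} $p_m(\ell,n{+}1)-p_m(\ell,n)$; this is proved by inserting the extra factor $|1-e^{-it}|\le|t|$ into the characteristic-function integral, which is the Fourier dual of your $\theta^2$-term bookkeeping on the saddle arc.

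Two small remarks on your write-up. First, your envelope computation $\partial_B\phi=d$ is correct and matches the paper's identity $\mathcal L=\log(1-e^{-c-d})-dB$ (equation~\eqref{eq:L+}); this is precisely why the exponent in~\eqref{eq:N} moves by $d/m$ when $n\mapsto n{+}1$. Second, the paper's decomposition has a third piece you only allude to: when comparing $N_{n+1}$ and $N_n$ one may either keep the same tilt $(c_m,d_m)$ for both (then the local probability at $n{+}1$ is off-center and one needs the difference LCLT) or re-tune the tilt to $(c_m',d_m')$ for $n{+}1$ (then the exponents differ). The paper does both and checks that the parameter-shift term is $O(m^{-3})$ via the identity $\partial_x L_m=A_m$, $\partial_y L_m=B_m$ (your envelope argument again, in its discrete form). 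Your contour-integral formulation with a single saddle $q_*$ folds these into one estimate, which is fine, but be aware that the ``complementary arc is $O(e^{-\epsilon m})$'' covers only the far tail; the intermediate annulus (the paper's region $\regionb$) requires the same care as in the Appendix to get the extra factor of $m^{-3/2}$ from $|1-e^{-it}|$.
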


\begin{unremark}
The condition $m^{-1} \, |n-lm/2| \rightarrow \infty$ is equivalent to 
$m \, \left|A-B/2\right| \rightarrow \infty$ and is satisfied if and 
only if $d$, which depends on $m$, is not $O(m^{-1})$.  It is 
automatically satisfied whenever the compact set $K$ is a subset 
of $\{ (x,y) : x > 2y > 0 \}$. 
\end{unremark}

\subsection*{Corollary: Asymptotics of Kronecker coefficients}
 
Recent developments in the representation theory of the symmetric 
and general linear groups, motivated by applications in 
Computational Complexity theory, have realized the consecutive 
differences  $N_{n+1}(\ell,m)-N_n(\ell,m)$ as specific Kronecker 
coefficients for the tensor product of irreducible $S_{\ell m}$ 
representations (see, for instance,~\cite{PakPanova2013} which is also 
one of the unimodality proofs).  The Kronecker coefficient 
$g(\la,\mu,\nu) = \dim {\rm Hom} (\mathbb{S}_\la, \mathbb{S}_\mu 
\otimes \mathbb{S}_\nu)$ is the multiplicity of the irreducible 
$S_{|\la|}$ Specht module $\mathbb{S}_\la$ in the tensor product 
of two other irreducible representations.  It is a notoriously 
hard problem to determine the values of these coefficients, 
and their combinatorial interpretation has been an outstanding 
open problem in Algebraic Combinatorics (see Stanley~\cite{StanleyOP}) 
since their definition by Murnaghan in 1938.  In general, 
determining even whether they are nonzero is an NP-hard problem 
and it is not known whether computing them lies in NP. 
See~\cite{IP17} and the literature 
therein for some recent developments on the relevance of Kronecker coefficients
in distinguishing complexity classes on the way towards 
${\rm P} \neq {\rm NP}$.  Being able to estimate particular 
values of Kronecker coefficients is crucial to the 
Geometric Complexity Theory approach towards these problems.

Because it is known (see~\cite{PakPanova2013}) that the consecutive difference
$N_n(m,\ell) -N_{n-1}(m,\ell)$ equals the Kronecker coefficient
$g((m\ell -n,n), (m^\ell), (m^\ell)),$
Theorem~\ref{th:difference} gives the first tight asymptotic 
estimate on this family of Kronecker coefficients. 

\begin{cor}
The Kronecker coefficient of $S_{m\ell}$ for the (rectangle, 
rectangle, two-row) case is asymptotically given by
$$g( (m^\ell), (m^\ell), (m\ell-n-1,n+1)) 
   = N_{n+1}(\ell,m) - N_{n}(\ell,m) 
   \sim \frac{d}{m} N_n(\ell,m)
   \sim \frac{de^{m \left[cA + 2dB - \log(1-e^{-c-d}) \right ]}}
   {2\pi m^3 \sqrt{\Delta \, (1-e^{-c})\, (1-e^{-c-d})}},$$
with constants and ranges as in Theorems~\ref{th:main} and~\ref{th:difference}. 
\end{cor}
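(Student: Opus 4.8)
The plan is to derive the corollary formally from a known representation-theoretic identity together with Theorems~\ref{th:main} and~\ref{th:difference}; no new analytic input is needed. First I would recall from~\cite{PakPanova2013}, as already quoted above, that the consecutive difference $N_n(m,\ell) - N_{n-1}(m,\ell)$ equals the Kronecker coefficient $g\bigl((m\ell - n,\, n),\, (m^\ell),\, (m^\ell)\bigr)$ for $1 \le n \le m\ell/2$. The reflection $\la \mapsto \la'$ of Young diagrams gives the obvious symmetry $N_n(\ell,m) = N_n(m,\ell)$, so the rectangle conventions here agree with those of the two theorems. Shifting $n \mapsto n+1$ and using the invariance of $g(\cdot,\cdot,\cdot)$ under permutation of its three arguments, this becomes
$$ g\bigl((m^\ell),\, (m^\ell),\, (m\ell - n - 1,\, n+1)\bigr) = N_{n+1}(\ell,m) - N_n(\ell,m). $$

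With the identity in hand the estimate is a two-line substitution. By Theorem~\ref{th:difference} --- whose hypotheses, in particular $m^{-1}|n - \ell m/2| \to \infty$, are precisely the ``ranges as in Theorem~\ref{th:difference}'' referred to in the statement --- we have $N_{n+1}(\ell,m) - N_n(\ell,m) \sim (d/m)\, N_n(\ell,m)$. Multiplying an asymptotic equivalence through by the quantity $d/m$ preserves it, so Theorem~\ref{th:main} --- legitimately applied since $(A,B) = (\ell/m, n/m^2)$ remains in the compact set $K$ --- lets us replace $N_n(\ell,m)$ by the right-hand side of~\eqref{eq:N}. Collecting the power of $m$ gives
$$ \frac{d}{m}\cdot\frac{e^{m[cA + 2dB - \log(1 - e^{-c-d})]}}{2\pi m^2 \sqrt{\Delta\,(1-e^{-c})\,(1-e^{-c-d})}} = \frac{d\, e^{m[cA + 2dB - \log(1 - e^{-c-d})]}}{2\pi m^3 \sqrt{\Delta\,(1-e^{-c})\,(1-e^{-c-d})}}, $$
which is the asserted formula.

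There is no real obstacle here beyond bookkeeping, and the substantive work is entirely contained in Theorems~\ref{th:main} and~\ref{th:difference}, which I am permitted to assume. The only points needing attention are: (i) the index shift from $N_n - N_{n-1}$ to $N_{n+1} - N_n$; (ii) the fact that $(m\ell - n - 1,\, n+1)$ is a genuine partition of $m\ell$, i.e.\ that $m\ell - n - 1 \ge n + 1$, which holds in the relevant regime because the standing assumption $A \ge 2B$ forces $n \le m\ell/2$, while the boundary value $n = \lfloor m\ell/2 \rfloor$ --- where this inequality could fail and where $d = O(m^{-1})$ --- is excluded by the hypothesis $m^{-1}|n - \ell m/2| \to \infty$; and (iii) matching the row/column roles of the rectangle $(m^\ell)$ with those of $N_n(\ell,m)$ via transposition, as noted above. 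None of these involves any estimation, so the corollary follows from the two theorems.
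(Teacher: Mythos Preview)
Your proposal is correct and matches the paper's approach exactly: the paper does not write out a separate proof but simply states the corollary as an immediate consequence of the identity from~\cite{PakPanova2013} combined with Theorems~\ref{th:main} and~\ref{th:difference}, which is precisely what you do (with, in fact, more care about the index shift, the symmetry $N_n(\ell,m)=N_n(m,\ell)$, and the permutation invariance of $g$ than the paper itself spells out).
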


\section{Review of previous results and description of methods}
\label{sec:history}

\subsection{Combinatorial Enumeration}

Work on this problem has developed in two streams.  First, there have
been combinatorial results aimed at asymptotic enumeration in various
regimes.  After Hardy and Ramanujan obtained an asymptotic formula 
for $N_n$ in~\cite{HardyRamanujan1918}, enumerative work focused on
$N_n (m)$, the number of partitions with part sizes bounded by $m$, or
equivalently, partitions of $n$ that fit in an $m \times \infty$ strip.
In~1941, Erd{\"o}s and Lehner~\cite{ErdosLehner1941} showed that 
$N_n(m) \sim \frac{n^{m-1}}{m!(m-1)!}$ for $m = o(n^{1/3})$.
This was generalized by Szekeres and others, ultimately leading 
to asymptotics of $N_n(m)$ for all $m$ in 1953~\cite{Szekeres1953}.  
Szekeres simplified his arguments a number of times, ultimately 
giving asymptotics using only a saddle-point analysis, without 
needing results on modular functions; his argument has been referred 
to as the Szekeres circle method.  Canfield~\cite{Canfield1997} gave 
a completely elementary proof (no complex analysis) of asymptotics 
for $N_n(m)$ using a recursive formula satisfied by these numbers. 

The combinatorial stream contains a few results on the asymptotics
of $N_n (m , \ell)$ but only in the regime where $m$ and $\ell$ are
greater than $\sqrt{n}$ by at least a factor of $\log n$.  This is
a natural regime to study because the typical values of the
maximum part (equivalently the number of parts) of a partition of size $n$
was shown by Erd{\"o}s and Lehner~\cite{ErdosLehner1941} to be 
of order $\sqrt{n \log n}$.  Szekeres~\cite[Theorem 1]{Szekeres1990} 
used saddle-point techniques to express $N_n(\ell,m)$ in terms of 
$N_n$, $\lambda := \frac{\pi \ell}{\sqrt{6n}}$ and 
$\mu := \frac{\pi m}{\sqrt{6n}}$. If, in fact, 
$$\frac{\sqrt{6n}}{\pi}\left(\frac{1}{4} + \ee \right) \log n < 
   \ell,m < \frac{\sqrt{6n} \, \log n}{\pi} $$
for some $\ee > 0$, then the distributions defined by 
$\ell$ and $m$ are independent and equal, and Szekeres' formula simplifies to
$$ N_n(\ell,m) \sim N_n \, \exp\left[-(\lambda + \mu) - \sqrt{\frac{6n}{\pi}}\left(e^{-\lambda}+e^{-\mu}\right)\right]. $$
The Szekeres circle method was recently revisited by Richmond~\cite{Richmond2018}.  
In~\cite{JiangWang} the authors, independently and concurrently with 
our paper, used the generating function for $q$-binomial coefficients 
and a saddle point analysis to derive the asymptotics for 
$N_n (m , \ell)$ in the cases when $m,\ell \geq 4\sqrt{n}$, 
corresponding to $B \leq \min\{1,A^2\}/16$ in our notation. 
Those authors express their result using the root of a 
hypergeometric identity similar to~\eqref{eq:integrals B}, 
however their methods give weaker error bounds and consequently 
cannot answer questions of unimodality.

\subsection{Probabilistic limit theorems}

The second strand of work on this problem has been probabilistic.
The goal in this strand has been to determine properties of a
random partition or Young diagram, picked from a suitable 
probability measure.  This approach goes back at least to Mann and 
Whitney~\cite{MannWhitney1947}, who showed that the size of 
a uniform random partition contained in an $\ell \times m$ 
rectangle satisfies a normal distribution.  
Frisdtedt~\cite{Fristedt1993} defined a distribution on partitions 
of all sizes, weighted with respect to a parameter $q < 1$.
The key property of the measure employed is that it makes the 
number $X_k(\lambda)$ of parts of size $k$ in the partition $\lambda$ 
drawn under this distribution independent as $k$ varies;
the distributions of the $X_k$ are reduced geometrics with respective parameter 
$1 - q^k$, so that their mean is $q^k / (1 - q^k)$.  
Fristedt is chiefly concerned with the limiting behavior of $k X_k$ for $k = o(\sqrt{n})$, which 
rescales, on division by $\sqrt{n}$, to an exponential distribution.

Much of the work following Fristedt's is concerned with a description of 
the limiting shape of the random partition, and fluctuations around that 
shape.  The limit shape of an unrestricted partition was posed as a 
problem by Vershik and first answered in~\cite{SzalayTuran1,SzalayTuran2}.
In 2001, Vershik and Yakubovich~\cite{VershikYakubovich2001} 
describe the limit shape for singly restricted partitions: those
with $m \leq c \sqrt{n}$.  They obtain both main (strong law)
results and fluctuation (CLT) results.  It is in this paper that
the probability measures $\P_m$ used in our analysis below first
arose, although we were unaware of this when we first derived them
from large deviation principles.  The limit shape for doubly restricted
partitions in the regime $m, \ell = \Theta (\sqrt{n})$ was first
described by Petrov~\cite{Petrov2009}.  It is identified there with 
a portion of the curve $e^{-x} + e^{-y} = 1$, which represents the 
limit shape of unrestricted partitions.  More recently, 
Beltoft et al.~\cite{BeltoftBoutillierEnriquez2012} obtained
fluctuation results in the doubly restricted regime.  The 
limiting fluctuation process is an Ornstein-Uhlenbeck bridge, 
generalizing the two-sided stationary Ornstein-Uhlenbeck process 
that gives the limiting fluctuations in the unrestricted 
case~\cite{VershikYakubovich2001}.   

\subsection{Enumeration via probability}

Strangely, we know of only one paper combining these two streams.
Tak{\'a}cs~\cite{Takacs1986} observed the following consequence of
the work of Fristedt and others.  
Begin a discrete walk at $(\ell,0)$ and randomly choose steps in the $(0,-1)$
or $(-1,0)$ directions by making independent fair coin flips.  If this walk 
goes from $(\ell,0)$ to $(0,-m)$ it takes precisely $m + \ell$ steps and 
encloses a Young diagram fitting in an $m \times \ell$ rectangle: see Figure~\ref{fig:RW}.
Let $G(m,\ell)$ denote the event that a walk of length $m+\ell$ ends at $(0,-m)$ 
and let $H(m,n)$ denote the event that 
the resulting Young diagram has area $n$.  Under the IID fair coin
flip probability measure on paths, all paths of length $m + \ell$ have 
the same probability $2^{-(m+\ell)}$.  Therefore, $\P [G(m,\ell) \cap
H(m,n)] = 2^{-(m+\ell)} N_n(\ell , m)$ and the problem of counting
$N_n(\ell,m)$ is reduced to determining the probability $\P [G(m,\ell) \cap H(m,n)]$.

\begin{figure}[!ht]
\centering
\includegraphics[height=1.6in]{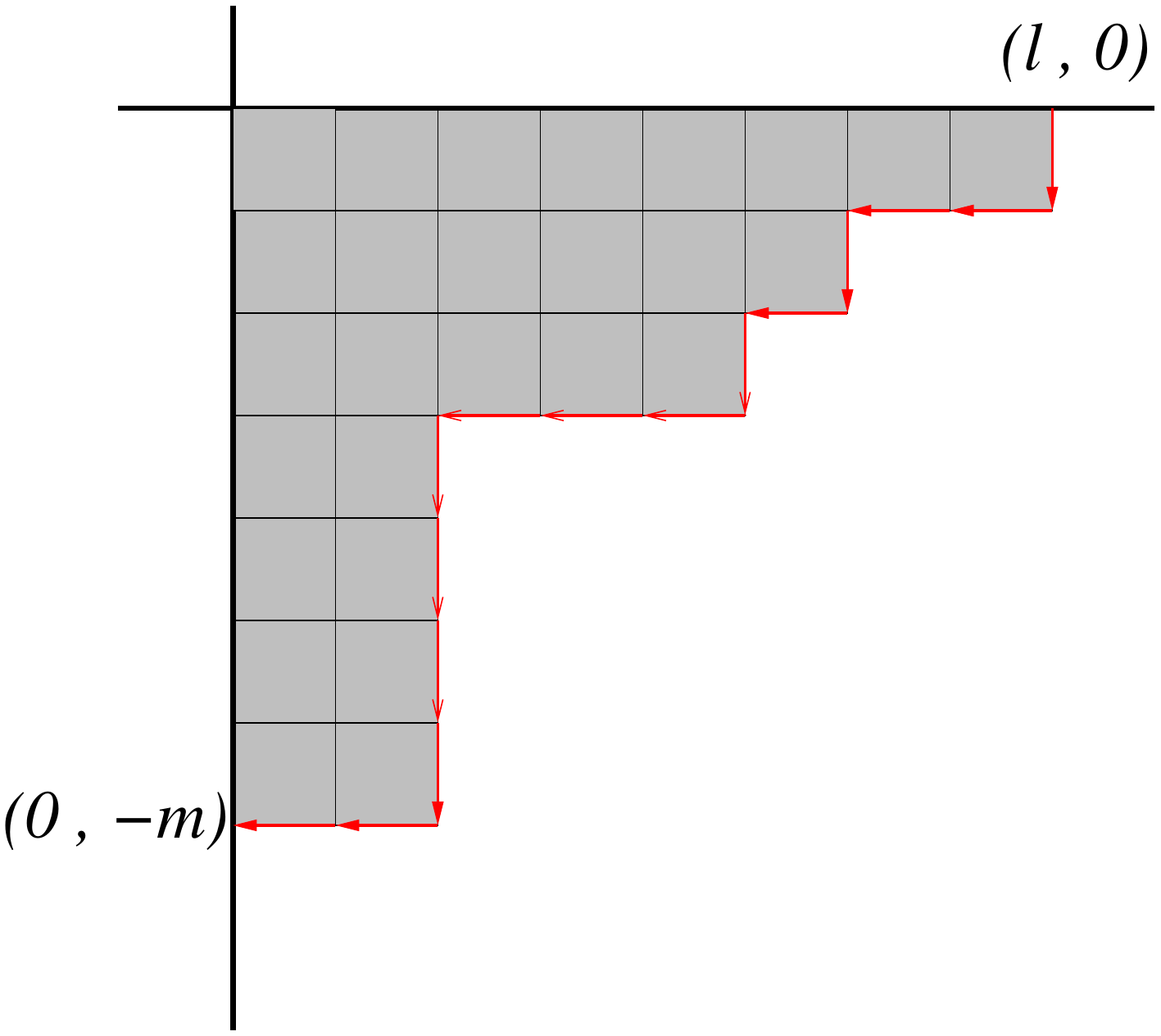}
\caption{The red arrows are the steps in a South and West directed simple random walk}
\label{fig:RW}
\end{figure}

Tak{\'a}cs observed that this probability is computable by a two-dimensional
local central limit theorem, ultimately obtaining 
bounds on the relative error that are of order $(m + \ell)^{-3}$. 
These error bounds are meaningful when $n$ differs from $m \ell / 2$ 
by up to a few multiples of $\log (m+\ell)$ standard deviations:
if $\ell = \theta (m)$ this means that $|B - A/2|m^2 = \Theta 
(m^{3/2} \log m)$.  
When $|B - A/2| \gg m^{-1/2} \log m$ the error is much bigger 
than the main term of the Gaussian estimate provided by the LCLT
and one cannot recover meaningful information about $N_n(\ell , m)$.
This is where Tak{\'a}cs left off and the present manuscript picks up.

\subsection{Description of our methods}

We use a local large deviation computation in place of a local
central limit theorem: this is possible because the restriction to an $m \times \ell$
rectangle is a linear constraint.  Indeed, consider now 
a partition $\lambda=(\lambda_1,\dots,\lambda_m)$ with at most $m$ parts
(so some $\lambda_j$ may be zero) and define $\lambda_0 := \ell$ and $\lambda_{m+1} := 0$.
It is convenient to encode a partition with respect to its \emph{gaps} $x_j := \lambda_j - \lambda_{j+1}$,
so the condition that $\lambda$ be a partition of $n$ of size at most $\ell$ is equivalent to $x_j \geq 0$ and
\begin{equation} \label{eq:diagram}
\sum_{j=0}^m x_j = \ell \,, \qquad\qquad \sum_{j=0}^m j x_j = n \, .
\end{equation}
Figure~\ref{fig:1} gives a pictorial proof.

\begin{figure}[!ht]
\centering
\includegraphics[width=4in]{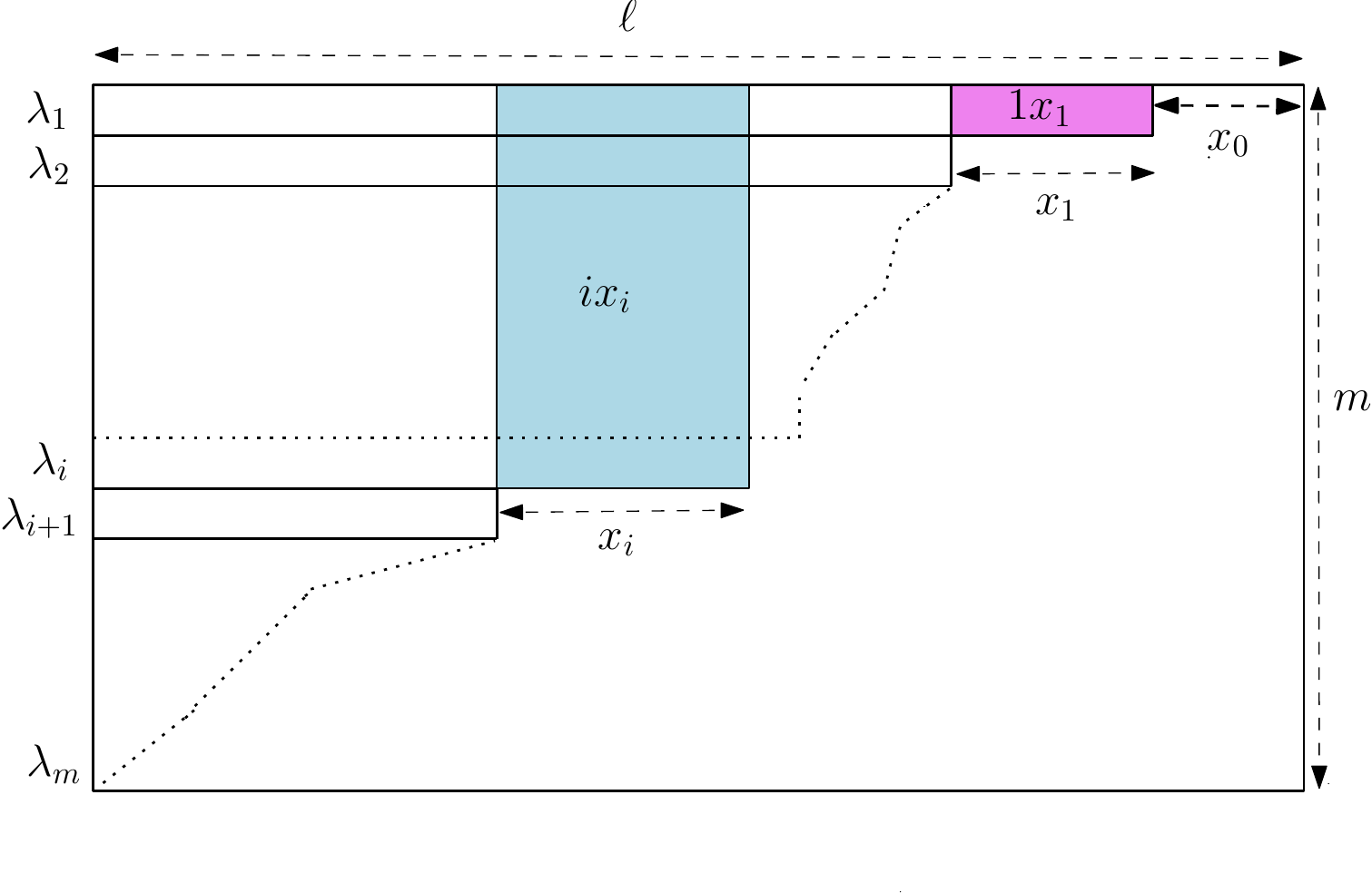}
\caption{The total area $n$ of a partition is composed of rectangles of area $j x_j$}
\label{fig:1}
\end{figure}

Solving the large deviation problem produces a ``tilted measure''
in which the gaps $X_j$ are no longer IID reduced geometrics 
with parameter $1/2$ but are instead given by independent 
reduced geometric variables whose parameters $q_j = 1 - p_j$
vary in a log-linear manner.  Log-linearity is dictated by the
variational large deviation problem and leads to the same 
simplification as before.  Not all partitions have the same probability
under the tilted measure, but all those resulting in a given value
of $\ell$ and $n$ do have the same probability.  Lastly, one must
choose the particular linear function $\log q_j = -c - d (j/m)$ to
ensure that $\lambda$ being a partition of $n$ with parts of size at most $\ell$ 
will again be in 
the central part of the tilted measure, so that asymptotics 
can be read off from a local CLT for the tilted measure.

The tilted measures $\P_m$ that we employ are denoted $\mu_{x,y}$
in~\cite{VershikYakubovich2001} and referred to there
as the grand ensemble of partitions.  That paper, however, was
not concerned with enumeration, only with limit shape results. 
For this reason they do not state or prove enumeration results.
In fact~\cite{Petrov2009} is able to prove the shape result
by estimating exponential rates only, showing rather elegantly
that an $\ee$ error in the rescaled shape leads to an exponential 
decrease in the number of partitions.
The present manuscript combines the idea of the grand ensemble
with some precise central limit estimates and some algebra inverting
the relation between the log-linear parameters and the parameters
$A$ and $B$ defining the respective limits of $\ell/m$ and $n/m^2$
to give estimates on $N_n(\ell , m)$ precise enough also to yield
asymptotic estimates on $N_{n+1} (\ell , m) - N_n (\ell , m)$.

The first step of carrying this out necessarily recovers the 
leading exponential behavior for $N_n (\ell , m)$, which is
implicit  in~\cite{VershikYakubovich2001} and~\cite{Petrov2009}
though Petrov only states it as an upper bound.
Interestingly, Tak{\'a}cs did not seem to be aware of the
ease with which the exponential rate may be obtained.  His result
states a Gaussian estimate and an error term.  As noted above, it is nontrivial
only when the $(m+\ell)^{-3}$ relative error term does not swamp 
the main terms, which occurs when $n$ is close to $\ell m/2$ 
(see also~\cite{AlmkvistAndrews1991}).  Figure~\ref{fig:expgrow}
shows Tak{\'a}cs' predicted exponential growth rate on a family of examples compared to the
actual exponential growth rate that follows from Theorem~\ref{th:main}.

\begin{figure}[!ht]
\centering
\includegraphics[width=0.4\linewidth]{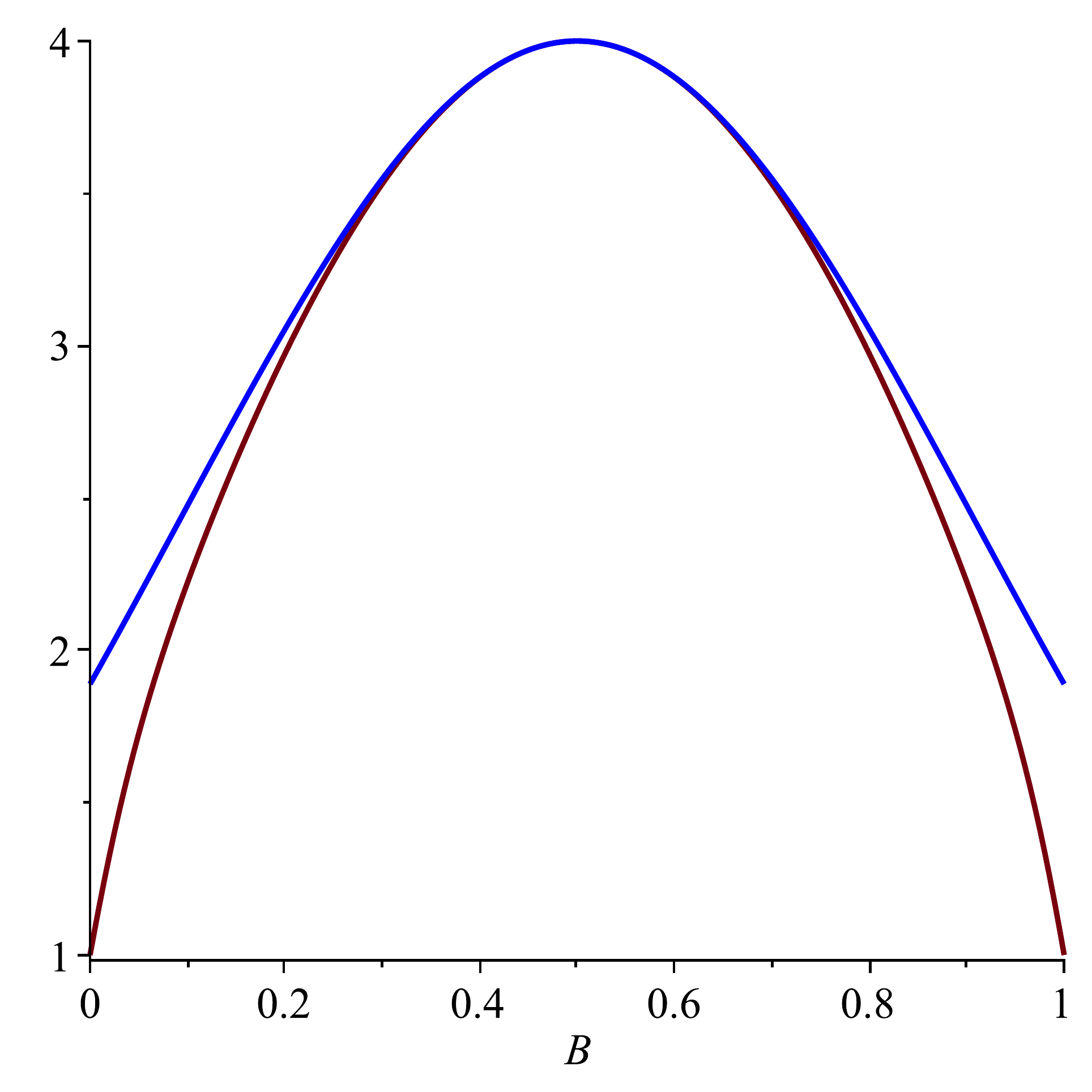}
\caption{Exponential growth of $N_{Bm^2}(m,m)$ predicted by Tak{\'a}cs' formula (blue, above) compared to the actual exponential growth given by Theorem~\ref{th:main} (red, below).}
\label{fig:expgrow}
\end{figure}

\section{A discretized analogue to Theorem~\protect{\ref{th:main}}}
\label{sec:discrete}

We now implement this program to derive asymptotics. With $c_m$ and $d_m$ to be specified later, let $q_j := e^{-c_m - j d_m/m}$, let $p_j := 1-q_j$ and let 
\[ L_m := \sum_{j=0}^m \log p_j.\]  
Let $\P_m$ be a probability law making the random variables $\{ X_j : 0 \leq j \leq m \}$ independent reduced geometrics with respective parameters $p_j$.  Define random variables $S_m$ and $T_m$ by
\begin{equation} 
\label{eq:S and T}
S_m := \sum_{i=0}^m X_i \, ; \qquad T_m:=\sum_{i=1}^m i X_i \, ,
\end{equation}
corresponding to the unique partition $\lambda$ satisfying $X_j = \lambda_j - \lambda_{j+1}$.
We first prove a result similar to Theorem~\ref{th:main}, except that the parameters $c$ and $d$ that solve integral Equations~\eqref{eq:integrals A} and~\eqref{eq:integrals B} are replaced by $c_m$ and $d_m$ satisfying the discrete summation Equations~\eqref{eq:mu_m} and~\eqref{eq:nu_m} below.  These equations say that $\E S_m = \ell$ and $\E T_m = m$.  Writing this out, using $\disp \E X_j = 1/p_j - 1 = 1 / \left ( 1 - e^{-c_m - d_m j/m} \right ) - 1$, gives
\begin{align}
\ell & =  \sum_{j=0}^m \frac{1}{1 - e^{-c_m - d_m j/m}} - (m+1) \label{eq:mu_m} \\
n & = m \sum_{j=0}^m \frac{j/m}{1 - e^{-c_m - d_m j/m}} - \frac{m(m+1)}{2} \label{eq:nu_m}  \, .
\end{align}

Let $M_m$ denote the covariance matrix for $(S_m , T_m)$.  The entries may be computed from the basic identity $\Var (X_j) = q_j / p_j^2$, resulting in
\begin{align}
\Var (S_m) & = \sum_{j=0}^m \frac{e^{-c_m - d_m j/m}}
   {\left ( 1 - e^{-c_m - d_m j/m} \right )^2} \label{eq:alpha_m} \\
\Cov (S_m , T_m) & = \sum_{j=0}^m j \frac{e^{-c_m - d_m j/m}}
   {\left ( 1 - e^{-c_m - d_m j/m} \right )^2} \label{eq:beta_m} \\
\Var (T_m) & = \sum_{j=0}^m j^2 \frac{e^{-c_m - d_m j/m}}
   {\left ( 1 - e^{-c_m - d_m j/m} \right )^2} \label{eq:gamma_m} \, .
\end{align}

\begin{thm}[discretized analogue] \label{th:1'}
Let $c_m$ and $d_m$ satisfy~\eqref{eq:mu_m}~--~\eqref{eq:nu_m}. Define $\alpha_m, \beta_m$ and $\gamma_n$ to be the normalized
entries of the covariance matrix
\[
	\alpha_m := m^{-1} \Var(S_m) \; ; \qquad
   	\beta_m := m^{-2} \Cov(S_m , T_m) \; ; \qquad
   	\gamma_m := m^{-3} \Var(T_m) \; ,
\]
which are $O(1)$ as $m \rightarrow\infty$. Again, let $A := \ell / m$ and $B := n/m^2$ and $\Delta_m := \alpha_m
\gamma_m - \beta_m^2$. Then as $m \to \infty$ with $\ell$ and $n$ varying so that $(A,B)$ remains in a compact subset of $\{ (x,y) : x \geq 2y > 0 \}$,
\begin{equation} 
\label{eq:N'}
N_n(\ell,m) \sim \frac{1}{2\pi m^2 \sqrt{\Delta_m}} \exp \left \{ m \left( -\frac{L_m}{m} + c_m A + d_m B \right ) \right \} \, .
\end{equation}
\end{thm}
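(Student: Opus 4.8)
The plan is to realize $N_n(\ell,m)$ as a point probability under the tilted measure $\P_m$ and then extract that probability via a local central limit theorem. The starting observation is the identity
\[
\P_m\bigl[S_m = \ell,\ T_m = n\bigr] = e^{L_m} \, 2^{n}\, \cdots
\]
— more precisely, writing out the joint mass function of the independent reduced geometrics $X_j$ and using the encoding~\eqref{eq:diagram}, every partition $\lambda$ with $\lambda_1 \le \ell$ and $|\lambda|=n$ has the \emph{same} $\P_m$-probability, namely $\prod_{j=0}^m p_j q_j^{x_j} = e^{L_m} \exp(-c_m S_m - d_m T_m/m)$ evaluated on the event $\{S_m=\ell, T_m=n\}$, which equals $e^{L_m} e^{-c_m \ell - d_m n/m} = e^{L_m - c_m \ell - d_m n/m} = e^{L_m - m(c_m A + d_m B)}$. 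Since there are exactly $N_n(\ell,m)$ such partitions,
\[
\P_m\bigl[S_m = \ell,\ T_m = n\bigr] = N_n(\ell,m)\, e^{L_m - m(c_m A + d_m B)},
\]
so that $N_n(\ell,m) = e^{-L_m + m(c_m A + d_m B)} \, \P_m[S_m=\ell, T_m=n]$, which already matches the exponential factor in~\eqref{eq:N'}. It remains to show $\P_m[S_m=\ell, T_m=n] \sim (2\pi m^2 \sqrt{\Delta_m})^{-1}$.

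The second and main step is a local CLT for the $\Z^2$-valued random vector $(S_m, T_m) = \sum_{j=0}^m (X_j, j X_j)$, which is a sum of independent (non-identically distributed, triangular-array) integer vectors. By the choice of $c_m, d_m$ in~\eqref{eq:mu_m}--\eqref{eq:nu_m} the mean of $(S_m,T_m)$ is exactly the target lattice point $(\ell, n)$, so we are asking for the probability at the mode. After the linear rescaling $(S_m,T_m) \mapsto (S_m/\sqrt{m},\, T_m/m^{3/2})$ the covariance matrix becomes $\begin{pmatrix}\alpha_m & \beta_m\\ \beta_m & \gamma_m\end{pmatrix}$, whose determinant is $\Delta_m = \Theta(1)$ (this needs $\Delta_m$ bounded below, which should follow from a Cauchy--Schwarz-with-slack argument on the sums~\eqref{eq:alpha_m}--\eqref{eq:gamma_m}, using that the weights $j/m$ genuinely vary; uniformity over the compact set $K$ is needed here). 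The rescaled vector converges to a bivariate normal $\mathcal N(0, \Sigma_m)$ with $\det \Sigma_m = \Delta_m$; the Gaussian density at $0$ is $(2\pi)^{-1}\Delta_m^{-1/2}$. Accounting for the lattice: the pair $(S_m, T_m)$ takes values in $\Z^2$ (a full-rank sublattice, covolume $1$, since $X_0$ contributes $(1,0)$ and $X_1$ contributes $(1,1)$), so the spacing in the rescaled coordinates is $m^{-1/2}\cdot m^{-3/2} = m^{-2}$ in area, giving
\[
\P_m\bigl[S_m=\ell,\ T_m=n\bigr] \;\sim\; m^{-2}\cdot \frac{1}{2\pi\sqrt{\Delta_m}}.
\]

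To prove the local limit theorem rigorously I would use the Fourier/characteristic-function method: write $\P_m[(S_m,T_m)=(\ell,n)] = (2\pi)^{-2}\int_{[-\pi,\pi]^2} \E\,e^{i(\theta S_m + \phi T_m)}\,e^{-i(\theta\ell+\phi n)}\,d\theta\,d\phi$, substitute $\theta = u/\sqrt m$, $\phi = v/m^{3/2}$, and split the integral into a central region $|u|,|v| \le \delta$ and a tail. On the central region one Taylor-expands $\log \E\,e^{i(\theta S_m + \phi T_m)}$ to second order — the first-order term cancels against $-i(\theta\ell+\phi n)$ because the mean is exactly $(\ell,n)$ — to get convergence of the integrand to $e^{-\frac12 (u,v)\Sigma_m (u,v)^{\!\top}}$ with a dominating bound (here the explicit form $\E e^{i\theta X_j} = p_j/(1-q_j e^{i\theta})$ and uniform control of $\Var X_j$ make the Taylor remainder estimates concrete). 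On the tail region one bounds $|\E e^{i(\theta S_m+\phi T_m)}| = \prod_j |p_j/(1-q_j e^{i(\theta + j\phi)})|$ away from $1$; since the $q_j$ are bounded away from $0$ and $1$ uniformly over $K$, each factor is $\le 1-\kappa\|{\cdot}\|^2$ for a fixed $\kappa>0$ when its argument is bounded away from $2\pi\Z$, and one needs a counting/equidistribution argument to show that enough of the $m$ phases $\theta + j\phi$ avoid $2\pi\Z$ so the product decays faster than any power of $m$. This tail estimate — making it uniform in $(\theta,\phi)$ over the non-central region and uniform over the compact parameter set $K$ — is the step I expect to be the main obstacle; everything else is bookkeeping once the uniform boundedness $\alpha_m,\beta_m,\gamma_m,\Delta_m^{\pm 1} = \Theta(1)$ is in hand (which in turn follows by comparing the sums in~\eqref{eq:alpha_m}--\eqref{eq:gamma_m} with the integrals defining their limits, exactly as $A$ and $B$ are the Riemann-sum limits of~\eqref{eq:mu_m}--\eqref{eq:nu_m}, a comparison also needed to identify $c_m\to c$, $d_m\to d$ and thus to see that~\eqref{eq:N'} is consistent with~\eqref{eq:N}).
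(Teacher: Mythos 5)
Your proposal follows essentially the same route as the paper: the exponential-tilting identity $N_n(\ell,m)=e^{-L_m+c_m\ell+d_m n/m}\,\P_m[(S_m,T_m)=(\ell,n)]$ is exactly the paper's Equation~\eqref{eq:P_m}, and your local CLT plan (Fourier inversion, second-order Taylor expansion on a central region where the first-order term vanishes because $c_m,d_m$ are chosen to center the means at $(\ell,n)$, plus a tail bound via counting how many phases $\theta+j\phi$ stay away from $2\pi\Z$) is precisely how Lemma~\ref{lem:lclt} is proved in the Appendix, including the lower bound on $\Delta_m$ via the quadratic form as in Lemma~\ref{lem:delta}. The step you flag as the main obstacle is handled by the paper exactly as you sketch it, so there is no gap in the approach.
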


\begin{proof}
The atomic probabilities $\P_m (\XX = \xx)$ depend only on $S_m$ and $T_m$ as
\begin{align*}
\log \P_m (\XX = \xx) & = \sum_{j=0}^m \left(\log p_j + x_j \log q_j\right) \\
& = L_m - \sum_{j=0}^m \left ( c_m + j \frac{d_m}{m} \right ) x_j \\
& = L_m - c_m \left ( \sum_{j=0}^m x_j \right ) - \frac{d_m}{m} 
   \left ( \sum_{j=0}^m j x_j \right ).
\end{align*}
In particular, for any $\xx$ satisfying~\eqref{eq:diagram},
\begin{equation} \label{eq:xx}
\log \P (\XX = \xx) = L_m - c_m \ell - \frac{d_m}{m} n \, .
\end{equation}
Three things are equivalent: $(i)$ the vector $\XX$ satisfies the 
identities~\eqref{eq:diagram}; $(ii)$ the pair $(S_m , T_m)$ is equal to 
$(\ell,n)$; $(iii)$ the partition $\lambda= (\lambda_1 , \ldots , \lambda_m)$ 
defined by $\lambda_j - \lambda_{j+1} = X_j$ for $2 \leq j \leq m-1$, 
together with $\lambda_1 = \ell - X_0$ and $\lambda_m = X_m$, is a 
partition of $n$ fitting inside a $m \times \ell$ rectangle.  Thus,
\begin{align}
N_n(\ell , m) &= \P_m \left [ (S_m,T_m) = (\ell , n) \right ] \, 
   \exp \left(- L_m + c_m \ell + \frac{d_m}{m} n\right) \nonumber \\
&= \P_m \left [ (S_m,T_m) = (\ell , n) \right ] \, \exp \left [ m \left ( 
   -\frac{L_m}{m} + c_m A + d_m B \right ) \right ] \label{eq:P_m} \, .
\end{align}
Comparing~\eqref{eq:N'} to~\eqref{eq:P_m}, the proof is completed by 
an application of the LCLT in Lemma~\ref{lem:lclt}.
\end{proof}

Lemma~\ref{lem:lclt} is stated for an arbitrary sequence of parameters 
$p_0, \ldots , p_m$ bounded away from 0 and 1, though we need it only 
for $p_j = 1 - e^{-c_m - d_m j/m}$.  For a $2 \times 2$ matrix $M$, 
denote by $M(s,t):= [s\, , \,\, t] \, M\, [s\,,\,\,t]^T$ the 
corresponding quadratic form.

\begin{lem}[LCLT]\label{lem:lclt}
Fix $0 < \delta < 1$ and let $p_0, \ldots , p_m$ be any real numbers in the interval $[\delta , 1 - \delta]$.  Let $\{ X_j \}$ be independent reduced geometrics with respective parameters $\{ p_j \}$, $S_m := \sum_{j=0}^m X_j,$ and $T_m := \sum_{j=0}^m j X_j$. Let $M_m$ be the covariance matrix for $(S_m , T_m)$, written
\[ M_m = \disp \left ( \begin{array}{cc} \alpha_m m & \beta_m m^2 \\ \beta_m m^2 & \gamma_m m^3 \end{array} \right ) \, ,\]
$Q_m$ denote the inverse matrix to $M_m$, and $\Delta_m = m^{-4} \det M_m = \alpha_m \gamma_m - \beta_m^2$.   Let $\mu_m$ and $\nu_m$ denote the respective means $\E S_m$ and $\E T_m$. Denote $p_m(a,b) := \P ((S_m , T_m) = (a,b))$.  Then
\begin{equation} 
\label{eq:LCLT}
\sup_{a,b \in \Z} m^2 \left | p_m (a,b) - \frac{1}{2 \pi (\det M_m)^{1/2}} e^{-\frac{1}{2} Q_m (a - \mu_m , b - \nu_m)} \right | \to 0
\end{equation}
as $m \to \infty$, uniformly in the parameters $\{ p_j \}$ in the allowed range.  In particular, if the sequence $(a_m , b_m)$ satisfies $Q_m (a_m - \mu_m , b_m - \nu_m) \to 0$ then 
\[ \P(S_m = a_m , T_m = b_m) = \frac{1}{2\pi \sqrt{\Delta_m} m^2} \left(1+O\left(m^{-3/2}\right)\right) \,. \]
\end{lem}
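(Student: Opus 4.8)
The plan is to prove the uniform local central limit theorem by the classical Fourier-analytic method, using the characteristic function of the lattice random vector $(S_m, T_m)$ and controlling it uniformly over the allowed parameter range. First I would write the atomic probability via Fourier inversion on the torus: since $(S_m, T_m)$ is supported on $\Z^2$,
\[
p_m(a,b) = \frac{1}{(2\pi)^2} \int_{[-\pi,\pi]^2} \phi_m(s,t) \, e^{-i(as+bt)} \, ds \, dt,
\]
where $\phi_m(s,t) = \E\bigl[e^{i(sS_m + tT_m)}\bigr] = \prod_{j=0}^m \psi_j(s + jt)$ and $\psi_j(u) = p_j / (1 - q_j e^{iu})$ is the characteristic function of a reduced geometric with parameter $p_j$. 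The corresponding Gaussian density at $(a,b)$ has the analogous Fourier representation with $\phi_m$ replaced by the Gaussian characteristic function $\exp\{i(s\mu_m + t\nu_m) - \tfrac12 M_m(s,t)\}$. So the difference $p_m(a,b)$ minus the Gaussian term equals an integral of $\phi_m$ minus a Gaussian, and the proof reduces to bounding this integral uniformly in $(a,b)$ — the phase factor $e^{-i(as+bt)}$ has modulus one, so it suffices to bound the $L^1$ distance of the two Fourier transforms, which removes all dependence on $(a,b)$.

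The heart of the argument is then a three-region decomposition of the $(s,t)$-torus, with scales chosen so that the relevant variable is the quadratic form $M_m(s,t)$, which has eigenvalues of order $m$ and $m^3$. In the \emph{central region}, where $M_m(s,t)$ is bounded, I would Taylor-expand $\log \psi_j(s+jt)$ to second order — using $\psi_j(0)=1$, $\psi_j'(0) = i\E X_j$, and $\psi_j''(0)/\psi_j(0) - (\psi_j'(0))^2 = -\Var X_j$ — sum over $j$, and show that $\phi_m(s,t)$ matches the Gaussian characteristic function up to a multiplicative $1 + o(1)$ error, where the $o(1)$ is uniform because the third derivatives $|\psi_j'''|$ are bounded uniformly over $p_j \in [\delta, 1-\delta]$ and the cubic error in the exponent is $O(|s|^3 m + |t|^3 m^4 + \cdots)$, which on the central scale is $o(1)$ after dividing by the correct powers of $m$. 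In the \emph{intermediate region}, where $M_m(s,t)$ is large but $(s,t)$ is still small in sup-norm, I would use $|\psi_j(u)| \leq \exp(-c_\delta u^2)$ valid for $|u|$ bounded — giving $|\phi_m(s,t)| \leq \exp(-c_\delta \sum_j (s+jt)^2) = \exp(-c_\delta' M_m(s,t)/m^{?})$ — so that the tail integral of $|\phi_m|$ is dominated by a convergent Gaussian tail, and likewise for the true Gaussian piece. In the \emph{outer region}, where $|t|$ is bounded below or $|s|$ is bounded away from $0$ modulo $2\pi$, the difficulty is that the summands $s + jt \bmod 2\pi$ do not stay near $0$; here I would argue that $\prod_j |\psi_j(s+jt)|$ is exponentially small in $m$ because for a positive proportion of indices $j$ the argument $s+jt$ is bounded away from $0$ modulo $2\pi$ (an equidistribution-type count of $\#\{j : \mathrm{dist}(jt, 2\pi\Z + (-s)) \geq \eta\}$), combined with the uniform strict bound $|\psi_j(u)| \leq 1 - c_\delta' < 1$ for such $u$, and the Gaussian piece is doubly-exponentially small there.

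Assembling the three bounds: the central region integral reproduces $(2\pi)^{-2}$ times the Gaussian density, i.e. $\tfrac{1}{2\pi(\det M_m)^{1/2}} e^{-\frac12 Q_m(a-\mu_m,b-\nu_m)}$, up to an error that is $o(m^{-2})$ uniformly in $(a,b)$, while the intermediate and outer contributions to both $p_m(a,b)$ and the Gaussian density are $o(m^{-2})$; multiplying through by $m^2$ and taking the supremum over $(a,b) \in \Z^2$ gives~\eqref{eq:LCLT}. The final ``in particular'' statement follows by plugging $(a_m,b_m)$ into~\eqref{eq:LCLT}: the Gaussian factor is $e^{-\frac12 Q_m(a_m-\mu_m,b_m-\nu_m)} = 1 + o(1)$ by hypothesis, $\det M_m = \Delta_m m^4$, and a small additional bookkeeping upgrade of the error term in the central-region Taylor expansion — tracking the cubic term as genuinely $O(m^{-3/2})$ rather than merely $o(1)$ — yields the stated multiplicative $1 + O(m^{-3/2})$. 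The main obstacle I anticipate is the \emph{uniformity in the parameters $\{p_j\}$}: every constant ($c_\delta$ in the quadratic upper bound, the bound on third derivatives, the strict bound $1 - c_\delta'$ away from the origin) must be shown to depend only on $\delta$, and the equidistribution count in the outer region must be made uniform in $t$; this is routine but requires care, since the parameters are allowed to depend on $m$ in an essentially arbitrary way within $[\delta, 1-\delta]$.
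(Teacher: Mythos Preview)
Your proposal is correct and follows essentially the same Fourier-analytic route as the paper: Fourier inversion on the torus, subtraction of the Gaussian characteristic function, and a three-region decomposition (central Taylor expansion with uniform cubic remainder, intermediate Gaussian-tail bound via $|\phihat| \le e^{-cM(s,t)}$, and an outer region handled by showing a positive fraction of the arguments $s+jt$ stay bounded away from $0$ modulo $2\pi$). The only cosmetic difference is that the paper makes the intermediate-region estimate explicit via an annular decomposition into shells $S_k = \{k \le \max(m^{1/2}|s|, m^{3/2}|t|) \le k+1\}$ and splits the resulting sum at $k \approx \sqrt{\log m}$, whereas you describe this step more heuristically; your anticipated ``main obstacle'' of uniformity in the $p_j$ is exactly what the paper isolates in its two preliminary lemmas.
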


The following consequence will be used to prove Theorem~\ref{th:difference}.

\begin{cor}[LCLT consecutive differences]\label{cor:lclt_error}
Let $\mN_m(a,b) := \frac{1}{2 \pi (\det M_m)^{1/2}} e^{-\frac{1}{2} Q_m (a - \mu_m , b - \nu_m)}$ be the normal approximation in Equation~\eqref{eq:LCLT}. Using the notation of Lemma~\ref{lem:lclt}, 
\[ \sup_{a,b\in \mathbb{Z}} \bigg|p_m(a,b+1)-p_m(a,b) - \big(\mN_m(a,b+1)-\mN_m(a,b)\big) \bigg|  = O(m^{-4}). \]
\end{cor}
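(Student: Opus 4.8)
The plan is to bootstrap off Lemma~\ref{lem:lclt} rather than redo a local limit theorem. Write $D_m(a,b) := p_m(a,b+1) - p_m(a,b)$ and $\widehat D_m(a,b) := \mN_m(a,b+1) - \mN_m(a,b)$. The naive bound coming from Lemma~\ref{lem:lclt} applied twice only gives $|D_m - \widehat D_m| = o(m^{-2})$, which is a factor $m^2$ too weak. The point is that differencing in $b$ should gain a full factor of $m^{-1}$ for the normal term (since $\widehat D_m \approx \partial_b \mN_m$ and each $b$-derivative of a Gaussian with variance of order $m^3$ costs $m^{-3/2}$, while $\mN_m$ itself is of order $m^{-2}$, so $\widehat D_m$ is of order $m^{-7/2}$) and, more importantly, the \emph{error} in the local limit theorem should also improve under differencing. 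So the real task is to produce a version of the LCLT proof that controls the discrete derivative, not just the function.

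The cleanest route is Fourier inversion. The characteristic function of $(S_m,T_m)$ is $\phi_m(s,t) = \prod_{j=0}^m \E e^{i(s+jt)X_j}$, an explicit product of reduced-geometric characteristic functions $\frac{p_j}{1 - q_j e^{i\theta}}$. Then $p_m(a,b) = (2\pi)^{-2}\int_{[-\pi,\pi]^2} \phi_m(s,t)\, e^{-i(as+bt)}\,ds\,dt$, and crucially
\[
 D_m(a,b) = p_m(a,b+1)-p_m(a,b) = \frac{1}{(2\pi)^2}\int_{[-\pi,\pi]^2} \phi_m(s,t)\,(e^{-it}-1)\, e^{-i(as+bt)}\,ds\,dt.
\]
The extra factor $(e^{-it}-1)$ is $O(|t|)$ near the origin, which is exactly the gain we need: in the Gaussian-approximation region $|t| \lesssim m^{-3/2}\log m$ it contributes a factor $O(m^{-3/2}\log m)$, cutting the main term and its error down by that factor; and since the whole LCLT error in Lemma~\ref{lem:lclt} was $O(m^{-1/2})$ relative (i.e. absolute error $o(m^{-2})$, sharpened to $O(m^{-7/2})$ absolute in the controlled regime), multiplying by $O(|t|)$ improves it to $O(m^{-4})$. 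One reruns the three standard pieces of the saddle-point/LCLT argument — (a) the central region, where $\log\phi_m(s,t)$ is expanded to quadratic order with cubic remainder uniformly controlled using $p_j\in[\delta,1-\delta]$; (b) the intermediate region $m^{-3/2}\log m \ll \|(s,\sqrt{m}\,t)\| \lesssim \epsilon$, where $|\phi_m|$ has Gaussian decay; (c) the tail region, bounded away from the origin, where $|\phi_m(s,t)| \le e^{-c m}$ because each factor has modulus strictly below $1$ and there are $\Theta(m)$ of them — and in each region one simply carries along the bounded factor $|e^{-it}-1| \le \min(|t|,2)$. The corresponding decomposition for $\widehat D_m$ is even easier since $\mN_m$ is an explicit Gaussian: $\widehat D_m(a,b) = \partial_b \mN_m(a,b) + O(\sup |\partial_b^2 \mN_m|)$, and the Gaussian's Fourier transform matches $\phi_m$ to the order needed in the central region.

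Concretely the steps are: (1) set up the Fourier representations of $D_m$ and $\widehat D_m$ and record the identity above and its Gaussian analogue; (2) split $[-\pi,\pi]^2$ into central / intermediate / tail regions with the same thresholds as in the proof of Lemma~\ref{lem:lclt}; (3) in the central region, use the quadratic Taylor expansion of $\log\phi_m$ with uniform cubic control (available because all $p_j$ lie in $[\delta,1-\delta]$, so all cumulants of $X_j$ are uniformly bounded and the covariance matrix $M_m$ has $\Delta_m$ bounded below — this uniform nondegeneracy must be invoked, and it holds for our parameters $p_j = 1-e^{-c_m-d_m j/m}$ on compact $(A,B)$-sets), and bound $|e^{-it}-1|$ by $|t|$ to extract the $m^{-3/2}$ gain, matching $\phi_m$ against the Gaussian and integrating the difference; (4) in the intermediate region, bound $|\phi_m - \widehat\phi_m|$ (Gaussian) by their individual Gaussian decay times $|t|$ and show the integral is $o(m^{-4})$; (5) in the tail, use exponential smallness $|\phi_m| \le e^{-cm}$, which kills that contribution entirely. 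Uniformity over the $\{p_j\}$ in $[\delta,1-\delta]^{m+1}$ comes for free from the fact that all the constants above depend only on $\delta$.

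The main obstacle is step (3): getting the error in the central-region comparison down to genuinely $O(m^{-4})$ rather than $O(m^{-7/2})$ requires that the $|t|$-factor be honestly exploited against the \emph{error} term in the $\phi_m$-vs-Gaussian comparison, not just against the main term. This means the Edgeworth-type expansion of $\log\phi_m$ has to be carried one order further than in a bare LCLT — one needs the cubic term's contribution, which after multiplication by $|e^{-it}-1| = O(|t|)$ and integration against the Gaussian still only produces $O(m^{-1/2} \cdot m^{-3/2} \cdot m^{-2}) = O(m^{-4})$, exactly on the boundary of what is claimed. So the delicate point is verifying that no term of order $m^{-7/2}$ survives in $D_m - \widehat D_m$ after the differencing; symmetry or parity of the relevant Gaussian moments, or simply that the leading correction to $\phi_m$ is odd in $(s,t)$ while $(e^{-it}-1)$ supplies the needed extra power, should make the surviving error $O(m^{-4})$. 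The rest is routine once Lemma~\ref{lem:lclt}'s machinery is in hand.
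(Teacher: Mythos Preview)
Your approach is correct and is exactly the paper's: Fourier inversion produces the extra factor $(e^{-it}-1)$, and one reruns the three-region decomposition $\regiona \cup \regionb \cup \regionc$ from the proof of Lemma~\ref{lem:lclt}, bounding $|1-e^{-it}| \le |t|$ and carrying that factor through the existing estimates.

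One comment: you are making step~(3) harder than it is. No higher-order Edgeworth expansion and no parity argument is needed, and there is no rogue $m^{-7/2}$ term to kill. On $\regiona$ one has $|t| \le L m^{-3/2}$ uniformly, and the LCLT proof already gives $\int_{\regiona} |\phihat - e^{-M/2}|\,ds\,dt = O(m^{-5/2})$; multiplying by the uniform bound on $|t|$ gives $O(m^{-4})$ immediately. On $\regionb$ the paper's shell decomposition $S_k$ already established $\sum_k (8k+4)(k+1)\max_{S_k}|\phihat - e^{-M/2}| = O(m^{-1/2})$ (the extra factor $(k+1)$ is present in~\eqref{eq:R2m2}--\eqref{eq:R2m3}); since $|t| \le (k+1)m^{-3/2}$ on $S_k$ and $|S_k| = (8k+4)m^{-2}$, the weighted integral is at most $m^{-7/2}$ times that sum, i.e.\ $O(m^{-4})$. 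The tail $\regionc$ is exponentially small as before. So the entire corollary is a three-line bookkeeping exercise once Lemma~\ref{lem:lclt} is proved; the ``delicate point'' you flag does not arise.
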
  

The technical but unsurprising proofs of Lemma~\ref{lem:lclt} and Corollary~\ref{cor:lclt_error} are given in the Appendix at the end of this article.  

\section{Limit shape} \label{sec:shape}

Suppose a Young diagram is chosen uniformly from among all partitions
of $n$ fitting in a $m \times \ell$ rectangle. To simplify calculations, we imagine
this Young diagram outlining a compact set in the fourth quadrant of the plane 
and rotate $90^\circ$ counterclockwise to obtain a shape in the first quadrant. 
Let $\Xi_{n,m,\ell}$ denote the random set obtained in this manner after 
rescaling by a factor of $1/m$, so that the length in the
positive $x$-direction is bounded by~1.  Fix $A > 2B > 0$ and metrize
compact sets of $\R^2$ by the Hausdorff metric.  As $m \to \infty$ 
with $\ell/m \to A$ and $n/m^2 \to B$, the random set $\Xi_{n,m,\ell}$ 
converges in distribution to a deterministic set $\Xi^{A,B}$. See Figure~\ref{fig:limtcurve}
for some examples.

Our methods immediately recover the distributional convergence result
$\Xi_{n,m,\ell} \to \Xi^{A,B}$.  
As previously mentioned, this limit shape was known to 
Petrov~\cite{Petrov2009} and others.  Petrov identifies it 
as a portion of the limit curve for unrestricted partitions, 
which itself was posed as a problem by Vershik and answered 
in~\cite{SzalayTuran1,SzalayTuran2} (see also~\cite{Vershik96}). 
Because this result is already known, along with precise 
fluctuation information which we do not derive, we give only
the short argument here for distributional convergence.  We do not 
determine the best possible fluctuation results following from this method.

The shape $\Xi_{n,m,\ell}$ is determined by its boundary, a polygonal
path obtained from a partition $\lambda$ by filling in unit vertical connecting lines in the 
step function $x \mapsto m^{-1} \lambda_{\lfloor mx \rfloor}$.
Recall that the probability measure $\P_m$ restricted to the event
$\{ (S_m, T_m) = (\ell , n) \}$ gives all partitions counted by
$N_n(m , \ell)$ equal probability and that $\P_m$ gives the event
$\{ (S_m, T_m) = (\ell , n) \}$ probability $\Theta (m^{-2})$.
Distributional convergence of $\Xi_{n,m,\ell}$ to $\Xi^{A,B}$
then follows from the following.

\begin{pr} \label{pr:shape}
Fix $A > 2B > 0$.  Define the maximum discrepancy by
$$\MM := \max_{0 \leq j \leq m} \left | \sum_{i=0}^j \left ( 
   X_i - \frac{q_i}{p_i} \right ) \right | \, .$$
Then for any $\ee > 0$,
$$\P_m \left [ \MM \geq \ee m \right ] = o(m^{-2})$$
as $m \to \infty$ with $\ell/m \to A$ and $n/m \to B$.  
\end{pr}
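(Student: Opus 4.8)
The plan is to recognize $\MM$ as the maximum of the absolute values of the partial sums $W_j := \sum_{i=0}^j \left( X_i - \E X_i \right)$ of a sequence of \emph{independent, centered} random variables (recall $\E X_i = q_i/p_i$), and to exploit the fact that each $X_i$ is a reduced geometric with parameter $p_i$ bounded uniformly away from $0$ and $1$. Since $A > 2B > 0$ is fixed, the solutions $c_m, d_m$ of \eqref{eq:mu_m}--\eqref{eq:nu_m} converge to the solutions $c, d$ of \eqref{eq:integrals A}--\eqref{eq:integrals B} (the sums on the right of \eqref{eq:mu_m}--\eqref{eq:nu_m} being Riemann-sum approximations of the integrals, with $O(1/m)$ error), so there is a $\delta \in (0,1)$ with $q_j = e^{-c_m - j d_m/m} \in [\delta, 1-\delta]$ for all $j$ and all large $m$; this is the one place where we borrow the bookkeeping already needed for Lemmas~\ref{lem:uniquecd} and~\ref{lem:lclt}.

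First I would record a uniform exponential moment bound. A reduced geometric $X$ with parameter $p \in [\delta, 1-\delta]$ has $\E e^{\theta X} = p/\big(1 - (1-p)e^{\theta}\big)$, finite for $\theta < -\log(1-p)$, hence finite for all $|\theta| \le \theta_0 := -\tfrac12 \log(1-\delta) > 0$, a threshold independent of $p$. On this interval a routine Taylor estimate gives a uniform constant $C = C(\delta)$ with $\log \E e^{\theta (X - \E X)} \le C\theta^2$. By independence, $\E e^{\theta W_j} = \prod_{i=0}^j \E e^{\theta (X_i - \E X_i)} \le e^{C(m+1)\theta^2}$ for every $j \le m$ and $|\theta| \le \theta_0$.

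Next, a Chernoff bound: for fixed $\ee > 0$, taking $\theta = \min\{\theta_0,\, \ee/(4C)\}$ (a constant independent of $m$) gives, for all large $m$, $\P_m[W_j \ge \ee m] \le e^{-\theta \ee m + C(m+1)\theta^2} \le e^{-\kappa(\ee) m}$ with $\kappa(\ee) > 0$, and likewise for $-W_j$. A crude union bound over $j \in \{0, 1, \ldots, m\}$ then yields
\[ \P_m[\MM \ge \ee m] \;\le\; \sum_{j=0}^m \P_m\big[\,|W_j| \ge \ee m\,\big] \;\le\; 2(m+1)\,e^{-\kappa(\ee) m}, \]
which is $o(m^{-2})$ — indeed super-polynomially small. (Etemadi's maximal inequality would eliminate even the factor $m+1$, but this is not needed.)

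The only genuinely delicate point is the uniform bound $q_j \in [\delta, 1-\delta]$, i.e.\ the fact that $c_m$ and $d_m$ neither drift to $0$ nor to $\infty$; with $A > 2B > 0$ fixed this follows from the continuity of the correspondence $(c,d) \leftrightarrow (A,B)$ in Lemma~\ref{lem:uniquecd} together with the Riemann-sum comparison mentioned above. Once that is in hand the rest is a standard exponential concentration argument (essentially a quantitative strong law for the partial sums), and the required $o(m^{-2})$ is comfortably met.
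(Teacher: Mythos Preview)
Your proof is correct and follows essentially the same route as the paper: bound the sub-Gaussian moment generating function of each centered $X_i$ uniformly in $i$ (using $p_i \in [\delta,1-\delta]$), apply a Chernoff bound to each partial sum $W_j$, and finish with a union bound over $j$ to get a bound of the form $m\,e^{-\kappa m} = o(m^{-2})$. If anything, your write-up is slightly more careful than the paper's (you track the dependence on $\ee$ explicitly and handle both tails), but the argument is the same.
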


\begin{proof}
This is a routine application of exponential moment bounds.  By our definition of $p_i$,
in this regime there exists $\delta>0$ such that $p_i \in 
[\delta , 1 - \delta]$ for all $i$.  Therefore, there are 
$\eta,K>0$ such that for $\lambda < \eta$, the mean zero variables 
$X_i - q_i / p_i$ all satisfy 
$\E \exp (\lambda (X_i - q_i / p_i)) \leq \exp (K \lambda^2)$.
Independence of the family $\{ X_i \}$ then gives 
$$\E \exp \left [ \lambda \sum_{i=0}^j (X_i - p_i / q_i) \right ]
   \leq e^{K m \lambda^2}$$
for all $j \leq m$.  By Markov's inequality,
$$\P ( |X_i - p_i / q_i| \geq \ee m) \leq e^{K m \lambda^2 - \lambda m} 
   \, .$$
Fixing $\lambda = 1/(2K)$ shows that this probability is bounded
above by $\exp (- m/(4K))$.  Hence, $\P (\MM \geq \ee m) \leq m e^{-m/(4K)}
= o(m^{-2})$ as desired.
\end{proof}

To see that Proposition~\ref{pr:shape} implies the limit shape statement,
let $\lambda_i := \ell - (X_0 + \cdots + X_{i-1})$ so that 
\[ y^{(m)}(i) := \E_m \lambda_i = \ell - \sum_{j=0}^{i-1} q_j / p_j.\]  
Proposition~\ref{pr:shape} shows the boundary of $\Xi_m$ to be within 
$o(m)$ of the step function $y^{(m)}(\cdot)$ except with probability 
$o(m^{-2})$. Since $\P_m$ restricted 
to the event $\{ (S_m, T_m) = (\ell , n) \}$ gives all partitions counted 
by $N_n(m , \ell)$ equal probability and $\P_m$ gives the event
$\{ (S_m, T_m) = (\ell , n) \}$ probability $\Theta (m^{-2})$,
the conditional law $(\P_m \, | \, (S_m , T_m) = (\ell , n))$
gives the event $\{ \MM > \ee m \}$ probability $o(1)$ as $m \to \infty$
with $\ell/m \to A$ and $n/m \to B$. Thus, the boundary of
$\Xi_m$ converges in distribution to the limit 
\begin{equation} \label{eq:y(x)}
y(x) := \lim_{m \to \infty} m^{-1} y^{(m)} (\lfloor mx \rfloor ) \, .
\end{equation}
Figure~\ref{fig:limtcurve} shows examples of two families of the limit curve as well as
a plot of the limit curve against uniformly generated restricted
partitions for several values of $m$ in the range $[120,300]$.

\begin{figure}[!ht]
\centering
\begin{subfigure}{0.315\textwidth}
\centering
\includegraphics[width=\linewidth]{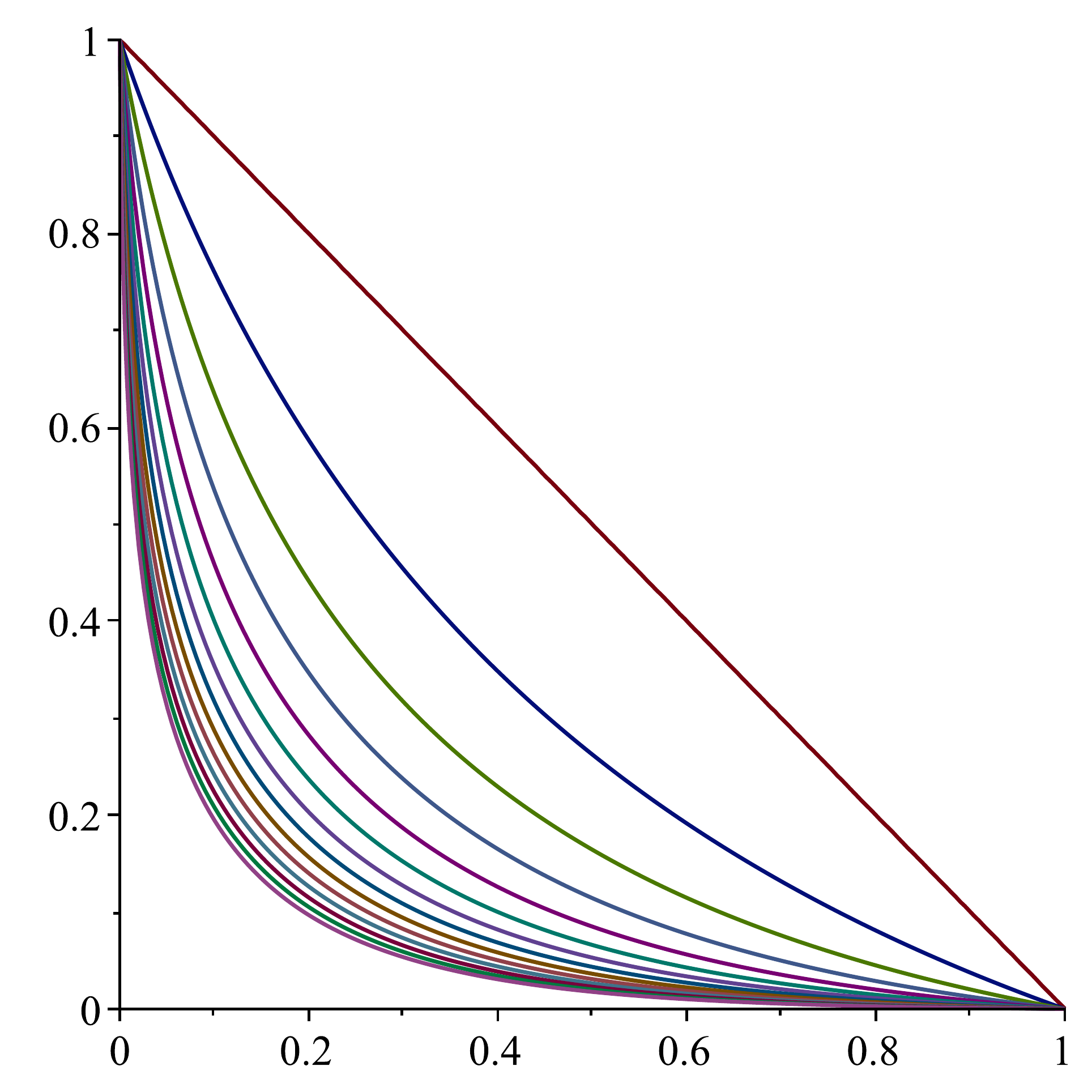} 
\subcaption*{$(A,B)=(1,1/k)$ \\ $k=2,\dots,15$}
\end{subfigure}
\begin{subfigure}{0.315\textwidth}
\centering
\includegraphics[width=\linewidth]{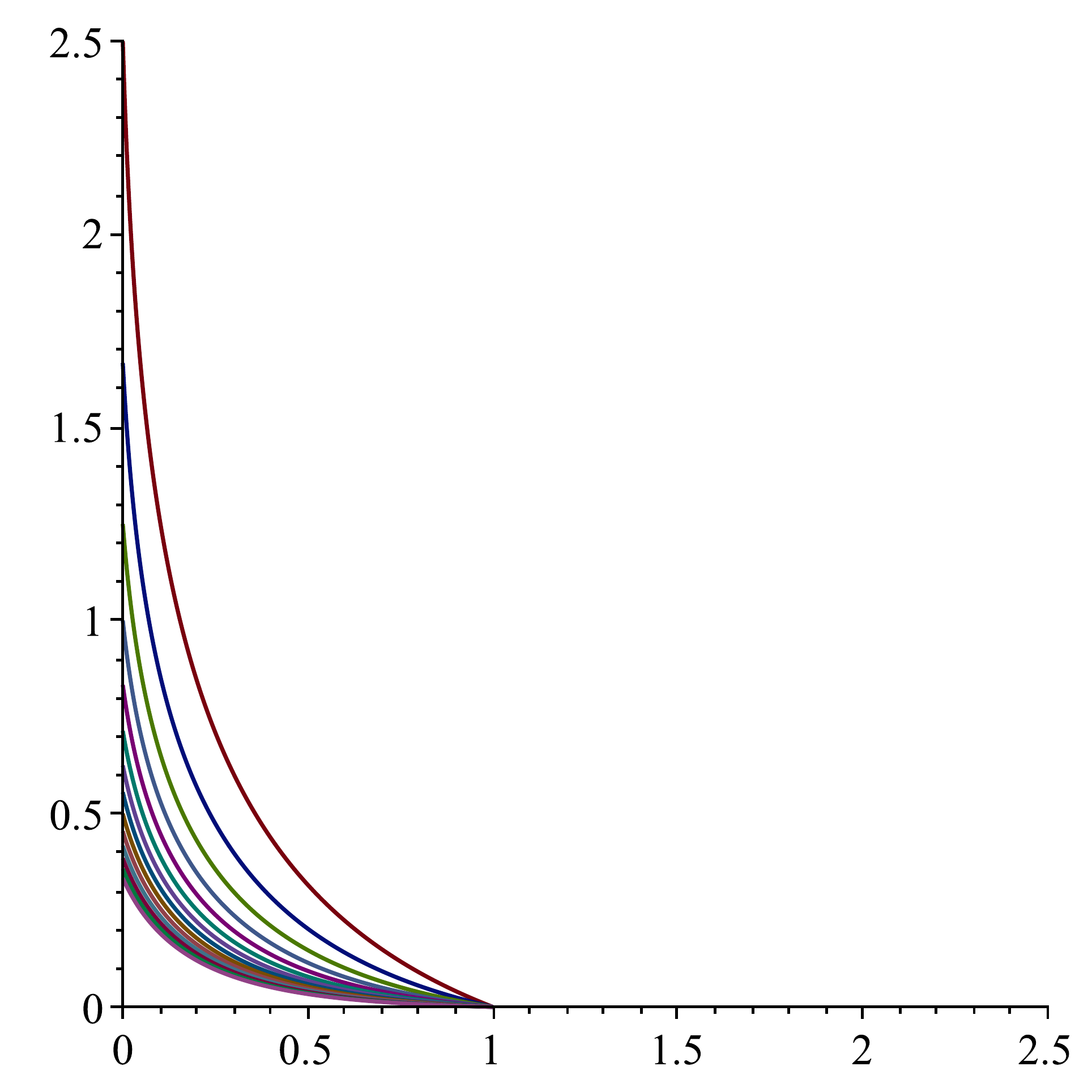}
\subcaption*{$(A,B)=(5/k,1/k)$ \\ $k=2,\dots,15$}
\end{subfigure}
\begin{subfigure}{0.35\textwidth}
\centering
\includegraphics[width=0.9\linewidth]{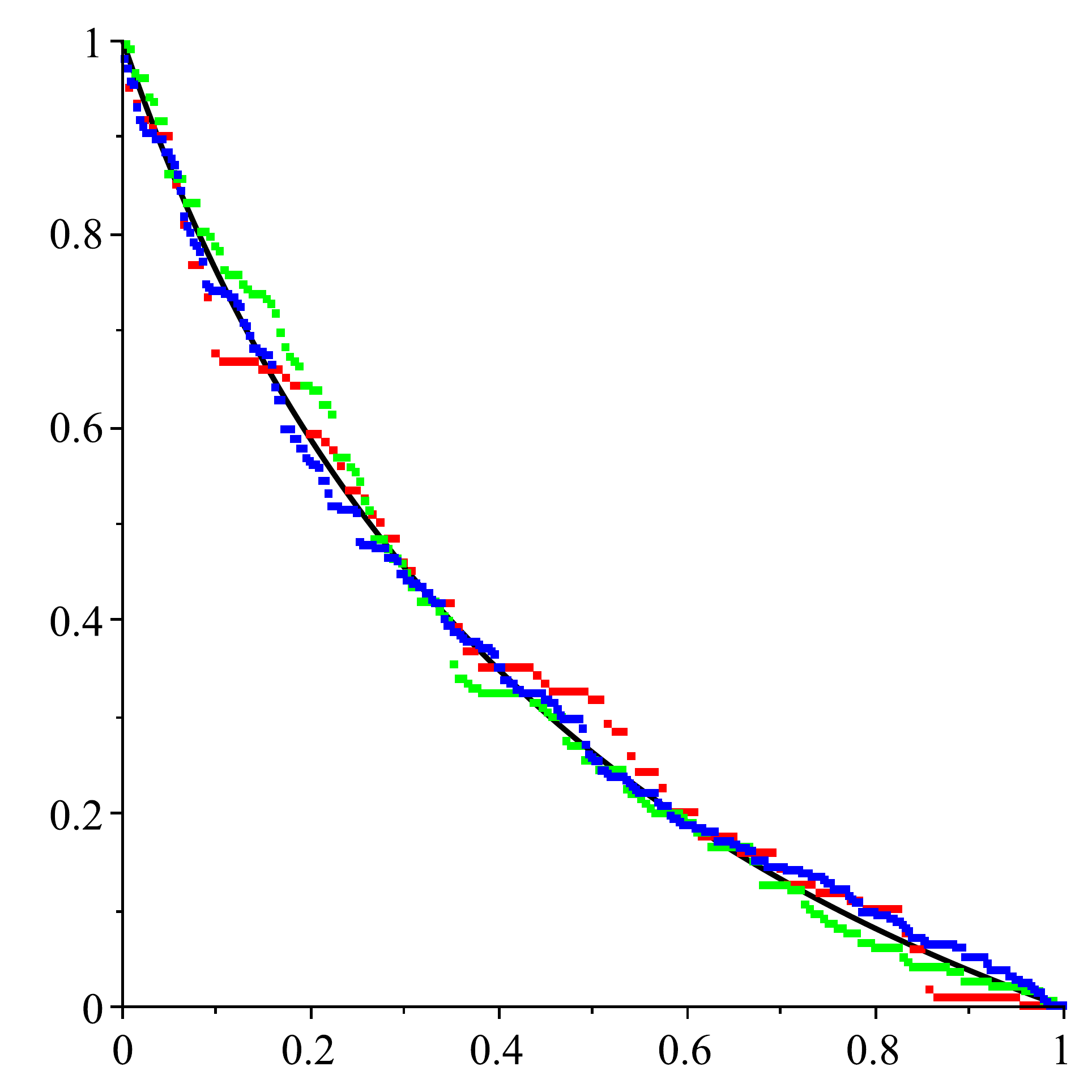}
\subcaption*{Limit curve of $(A,B)=(1,1/3)$ and random partitions of size {\color{red}120}, {\color{darkgreen}201} and {\color{blue}300}.}
\end{subfigure}
\caption{Limit shapes of scaled partitions as $m\rightarrow \infty$.}
\label{fig:limtcurve}
\end{figure}

Substituting the definition of $y^{(m)}(i)$ into~\eqref{eq:y(x)} and
evaluating the limit as an integral gives
$$y(x) = A + x - \int_0^x \frac{1}{1-e^{-c-dt}}dt 
   = A+x - \frac{1}{d}\ln\left(\frac{e^{xd+c}-1}{e^c-1}\right) \, .$$
After expressing $c$ in terms of $d$, this may be written implicitly as
$$e^{(A+1)d}-1 = (e^d-1) e^{d(A-y)} + (e^{Ad}-1)e^{d(1-x)}$$
which simplifies to
\begin{equation} \label{eq:lim_curve}
(1-e^{-c})e^{d(A-y)} + e^{-c}e^{-dx} = 1 
\end{equation}
as long as $A > 2B$; in the special case $A = 2B$ one obtains simply 
$y = A \cdot (1-x)$.

It is worth comparing this result with the limit shape derived 
in~\cite{Petrov2009}.  There the limit shape of the boxed partitions 
is identified as the portion of the curve $\{ e^{-x} + e^{-y} = 1 \}$,
which is the limit shape of unrestricted partitions.  The portion 
is determined implicitly by the restriction that the endpoints 
of the curve are the opposite corners of a $1 \times A$-proportional 
rectangle and that the area under the curve has the desired proportion, 
that is $B/A$ of the total rectangular area.  To see that this 
matches~\eqref{eq:lim_curve} we can calculate the given portion explicitly.

Let $x=s_1,s_2$ be the starting and ending points of the bounding 
rectangle.  The side ratio and the area requirement are respectively
equivalent to
\begin{align*}
\frac{\log(1-e^{s_1})-\log(1-e^{-s_2})}{s_2-s_1} & = A   \\
\mbox{and} \hspace{4in} & \hspace{4in} \\
\int_{s_1}^{s_2} - \log(1-e^{-t})dt + (s_2-s_1) \log(1-e^{-s_2}) & = 
   B (s_2-s_1)^2  
\end{align*}
which simplify to
\begin{eqnarray}
A & = & \frac{1}{s_2-s_1} \log\left(\frac{e^{s_2}-1}{e^{s_2} 
   - e^{s_2-s_1}}\right) \label{eq:compare A} \, , \\
B & = & \frac{-\dilog(1-e^{-s_2}) + \dilog(1-e^{-s_1}) 
   + (s_2-s_1)\log(1-e^{-s_2})}{(s_2-s_1)^2} \, . \label{eq:compare B}
\end{eqnarray}

Comparing these equations with equations~\eqref{eq:integrals A} 
and~\eqref{eq:integrals B} 
it is immediate that the solutions are given by $s_1=c$ and $s_2=c+d$. 
Finally, to match the curve in the second line of equation~\eqref{eq:lim_curve} 
we need the coordinate transform from the curve $\gamma$ in the segment 
$x = [c,c+d]$ given by
$$x \to x_1 = \frac{(x-c)}{d}, \qquad y \to y_1-A =  
   \frac{y+\log(1-e^{-c})}{d}$$
whence $x = d x_1 + c$ and $y = -d (A-y_1) - \log(1-e^{-c})$ 
and the curves match.

\section{Existence and Uniqueness of \texorpdfstring{$c,d$}{c,d}}
\label{sec:constants}

We now show that for any $A \geq 2B > 0$ there exists unique positive constants $c$ and $d$ satisfying Equations~\eqref{eq:integrals A} and~\eqref{eq:integrals B}. If $A=B/2$ then $d=0$ and $c$ can be determined uniquely, so we may assume $A > 2B > 0$. The following lemma will be used to show uniqueness.

\begin{lem} \label{lem:J}
Let $\psi$ denote the map taking the pair $(c,d)$ to $(A,B)$ defined by the two integrals in Equations~\eqref{eq:integrals A} and~\eqref{eq:integrals B}, and let $K$ be a compact subset of $\{ (x,y) : x > 2y > 0 \}$. The Jacobian matrix $J := D[\psi]$ is negative definite for all $(c,d) \in (0,\infty)^2$, and all entries of $\psi$ and $J$ (respectively $\psi^{-1}$ and $J^{-1}$) are Lipshitz continuous on $\psi^{-1}[K]$ (respectively $K$).
\end{lem}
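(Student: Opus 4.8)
The plan is to compute the Jacobian $J=D[\psi]$ explicitly, read off its negative-definiteness from a Cauchy--Schwarz (Gram-matrix) inequality, and then derive all the Lipschitz statements from the inverse function theorem once $\psi^{-1}[K]$ is known to be a compact subset of the open quadrant. Concretely: writing $g(t):=e^{-c-dt}\big(1-e^{-c-dt}\big)^{-2}$ --- smooth and strictly positive on $[0,1]$ for every $(c,d)\in(0,\infty)^2$, and the continuous analogue of the reduced-geometric variance in~\eqref{eq:alpha_m} --- differentiation under the integral sign in~\eqref{eq:integrals A}--\eqref{eq:integrals B} (legitimate since the integrands are smooth in $(c,d)$ and locally uniformly bounded) gives
\[
J \;=\; -\begin{pmatrix} \int_0^1 g(t)\,dt & \int_0^1 t\,g(t)\,dt \\[4pt] \int_0^1 t\,g(t)\,dt & \int_0^1 t^2 g(t)\,dt \end{pmatrix},
\]
which is symmetric; equivalently $\psi=\nabla\Phi$ for $\Phi(c,d):=\int_0^1 \log\big(1-e^{-c-dt}\big)\,dt$, so $J=\mathrm{Hess}\,\Phi$.

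Negative-definiteness will then be a one-liner: for $(s,\tau)\neq(0,0)$,
\[
-[s,\tau]\,J\,[s,\tau]^{T} \;=\; \int_0^1 (s+\tau t)^2\, g(t)\,dt \;>\; 0,
\]
strictly, since $g>0$ throughout $[0,1]$ while $t\mapsto(s+\tau t)^2$ vanishes at most once; put differently, $-J$ is the Gram matrix of the linearly independent functions $1,t$ in $L^2([0,1],g\,dt)$. Hence $J$ is negative definite on all of $(0,\infty)^2$, $\Phi$ is strictly concave, and the gradient map $\psi$ is injective on the convex set $(0,\infty)^2$ --- which is exactly the uniqueness asserted in this section --- and, being $C^\infty$ with everywhere-invertible derivative, $\psi$ is a diffeomorphism onto its image.

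Next I would reduce the Lipschitz claims to showing $\psi^{-1}[K]$ is a compact subset of $(0,\infty)^2$. Granting that, the entries of $\psi$ and of $J$ are $C^\infty$ on the open quadrant, hence Lipschitz on the compact set $\psi^{-1}[K]$; the inverse function theorem gives $\psi^{-1}\in C^\infty$ on $K$ with $D[\psi^{-1}]=J^{-1}\!\circ\psi^{-1}$ bounded (because $J$ is bounded and bounded away from singular on $\psi^{-1}[K]$), so $\psi^{-1}$ is Lipschitz on $K$; and $J^{-1}=(\det J)^{-1}\,\mathrm{adj}\,J$ is $C^\infty$, so $J^{-1}\!\circ\psi^{-1}$ is Lipschitz on $K$ as a composition of Lipschitz maps. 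To see $\psi^{-1}[K]$ is compact I would use the elementary monotone bounds
\[
B \;\le\; \int_0^1\! t\Big(1+\tfrac{1}{c+dt}\Big)dt-\tfrac12 \;\le\; \tfrac1d,\qquad A \;\le\; \tfrac{e^{-c}}{1-e^{-c}},\qquad A+1 \;\ge\; \int_0^1\!\tfrac{dt}{c+dt} \;=\; \tfrac1d\log\!\big(1+\tfrac dc\big),
\]
together with $A-2B=\int_0^1 (1-2t)\big(1-e^{-c-dt}\big)^{-1}dt\ge \tfrac16 g_0 d$ for a suitable $g_0>0$: the first two inequalities confine $c$ and $d$ from above on $\psi^{-1}[K]$ (using that $A$, $B$ and $A-2B$ are each bounded and bounded away from $0$ on $K$), and the last two then confine $c$ and $d$ from below, so $\psi^{-1}[K]$ is closed and bounded inside $(0,\infty)^2$. (A posteriori, once existence and uniqueness of $(c,d)$ are both in hand, invariance of domain gives continuity of $\psi^{-1}$ on $\{x>2y>0\}$ and hence compactness of $\psi^{-1}[K]$ more directly.)

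The heart of the argument is the short Gram-matrix computation for negative-definiteness --- everything else follows formally --- so the only real obstacle is the bookkeeping of the boundary behavior: confirming that $\psi(c,d)$ leaves every compact subset of $\{x>2y>0\}$ as $(c,d)$ tends to the topological boundary of the open quadrant, including the corner limits where $c$ and $d$ degenerate simultaneously, which is exactly what the monotone estimates above are designed to handle.
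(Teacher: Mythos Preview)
Your proof is correct, and the central step --- recognizing $-J$ as the Gram matrix of $1,t$ in $L^2([0,1],g\,dt)$ --- is exactly the paper's argument, phrased slightly differently: the paper introduces the measure $\rho$ with density $g$ and writes $\det J = \E_\rho[1]^2\cdot\Var_\sigma[t]>0$, then reads off negative-definiteness from the sign of the trace; you compute the quadratic form $\int_0^1(s+\tau t)^2 g\,dt$ directly. These are equivalent.

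Where you genuinely go further is on the Lipschitz claims. The paper's proof simply asserts that $\det J$ is ``bounded above and below when $c$ and $d$ are bounded away from $0$, implying the stated results on Lipschitz continuity,'' but never verifies that $(c,d)\in\psi^{-1}[K]$ forces $c,d$ to be bounded and bounded away from $0$ --- i.e., that $\psi^{-1}[K]$ is compact in the open quadrant. You supply this via the elementary monotone bounds, and that argument is sound: the upper bounds on $c,d$ come first from $A\le e^{-c}/(1-e^{-c})$ and $B\le 1/d$; with those in hand, $g$ is bounded below on $[0,1]$ by the value of $x\mapsto e^{-x}/(1-e^{-x})^2$ at $x=c_{\max}+d_{\max}$, so your inequality $A-2B\ge \tfrac16 g_0 d$ gives the lower bound on $d$; finally $A+1\ge d^{-1}\log(1+d/c)$ gives the lower bound on $c$. (Your sentence orders the ``last two'' bounds the other way round, but the logic works as above.) This compactness is tacitly used again in the paper's Section~\ref{sec:implication}, so making it explicit here is a genuine improvement.

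The observation that $\psi=\nabla\Phi$ with $\Phi(c,d)=\int_0^1\log(1-e^{-c-dt})\,dt$ is also a nice addition: the paper records only the discrete analogue (Equation~\eqref{eq:derivatives}), and your version immediately yields global injectivity of $\psi$ on the convex domain $(0,\infty)^2$, which the paper proves separately in Lemma~\ref{lem:uniquecd} via a line-integral argument.
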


\begin{proof}
Differentiating under the integral sign shows that the partial derivatives comprising the entries of $D[\psi]$ are given by
\begin{eqnarray*}
J_{A,c} & = & \int_0^1 \frac{- e^{-(c+dt)}}{(1 - e^{-(c+dt)})^2} \, dt \\
J_{A,d} & = & \int_0^1 \frac{- t \, e^{-(c+dt)}}{(1 - e^{-(c+dt)})^2} \, dt \\
J_{B,c} & = & \int_0^1 \frac{- t \, e^{-(c+dt)}}{(1 - e^{-(c+dt)})^2} \, dt \\
J_{B,d} & = & \int_0^1 \frac{- t^2 \, e^{-(c+dt)}}{(1 - e^{-(c+dt)})^2} 
   \, dt  \, ;
\end{eqnarray*}
note that each term is negative. 
Let $\rho$ denote the finite measure on $[0,1]$ with density $e^{-(c+dt)} / (1 - e^{-(c+dt)})^2$ and let $\E_\rho$ denote expectation with respect to $\rho$.  Then
\[ J_{A,c} =  \E_\rho [-1], \qquad J_{A,d} = J_{B,c} =  \E_\rho [-t], \qquad J_{B,d} = \E_\rho [-t^2], \]
and
\[ \det J =  \E_\rho [1] \cdot \E_\rho [t^2] - \left(\E_\rho [t]\right)^2 = \E_{\rho}[1]^2 \cdot \Var_{\sigma}[t], \]
where $\Var_{\sigma}[t]$ denotes the variance of $t$ with respect to the normalized measure $\sigma = \rho/\E_{\rho}[1]$. In particular, $\det J$ is positive, and bounded above and below when $c$ and $d$ are bounded away from 0, implying the stated results on Lipshitz continuity. As $J$ is real and symmetric, it has real eigenvalues. Since the trace of $J$ is negative while its determinant is positive, the eigenvalues of $J$ have negative sum and positive product, meaning both are strictly negative and $J$ is negative definite for any $c,d>0$.
\end{proof}

\begin{lem}
\label{lem:uniquecd}
For any $A>0$ and $B \in (0,A/2)$ there exist unique $c,d>0$ satisfying Equations~\eqref{eq:integrals A} and~\eqref{eq:integrals B}. Moreover, for a fixed $A$, when $B$ decreases from $A/2$ to $0$ then $d$ increases strictly from $0$ to $\infty$ and $c$ decreases strictly from $\log\left(\frac{A+1}{A}\right)$ to $1$. When $B>0$ is fixed and $A$ goes to $\infty$ then $c$ goes to $0$ and $d$ goes to the root of 
$$d^2 =B \left( d\log (1-e^{-d}) -\dilog(1-e^{-d}) \right).$$
\end{lem}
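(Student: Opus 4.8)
The plan is to let Lemma~\ref{lem:J} carry uniqueness and the sign of every derivative that arises, and to get existence by reducing to a one-variable monotonicity statement. First I record the identities obtained by folding the constants $-1$ and $-\tfrac12$ into the integrands of~\eqref{eq:integrals A} and~\eqref{eq:integrals B}:
\[
  A = \psi_1(c,d) := \int_0^1 \frac{1}{e^{c+dt}-1}\,dt\,, \qquad
  B = \psi_2(c,d) := \int_0^1 \frac{t}{e^{c+dt}-1}\,dt\,,
\]
valid for all $c>0$, $d\ge0$; this is the map $\psi=(\psi_1,\psi_2)$ of Lemma~\ref{lem:J}. Since $1/(e^{c+dt}-1)$ is strictly decreasing in $t$ when $d>0$ and constant when $d=0$, the ratio $\psi_2/\psi_1$ is a $t$-average over $[0,1]$ that is $<\tfrac12$ exactly when $d>0$; hence $B=A/2$ forces $d=0$ (and then $A=1/(e^{c}-1)$ determines $c$), and from now on $0<B<A/2$. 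Uniqueness of $(c,d)$ is then immediate from Lemma~\ref{lem:J}, which says $J=D[\psi]$ is negative definite on the convex set $(0,\infty)^2$: if $\psi(c_1,d_1)=\psi(c_2,d_2)$, then with $v:=(c_2,d_2)-(c_1,d_1)$ and integrating $\tfrac{d}{ds}\psi$ along the joining segment, $0=v\cdot\big(\psi(c_2,d_2)-\psi(c_1,d_1)\big)=\int_0^1 v^{T}Jv\,ds<0$ unless $v=0$.

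To prove existence I fix $A>0$ and solve $\psi_1(c,d)=A$ for $c$ in terms of $d$. For each fixed $d\ge0$, $c\mapsto\psi_1(c,d)$ is continuous and strictly decreasing (its $c$-derivative is the integral of a negative function), with $\psi_1(c,d)\le 1/(e^{c}-1)\to0$ as $c\to\infty$ and $\psi_1(c,d)\ge\int_0^1 (c+dt)^{-1}\,dt=\tfrac1d\log\tfrac{c+d}{c}\to\infty$ as $c\to0^{+}$ (for $d=0$, $\psi_1(c,0)=1/(e^{c}-1)$). So there is a unique $c_A(d)>0$ with $\psi_1(c_A(d),d)=A$; by the implicit function theorem $c_A\in C^1$, with $c_A(0)=\log\tfrac{A+1}{A}$ and $c_A'(d)=-J_{A,d}/J_{A,c}<0$ since $J_{A,c},J_{A,d}$ are both negative (each is an integral of a negative function). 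Put $g_A(d):=\psi_2(c_A(d),d)$; then $g_A'(d)=J_{B,c}c_A'(d)+J_{B,d}=\det J/J_{A,c}<0$, because $\det J>0$ (Lemma~\ref{lem:J}) and $J_{A,c}<0$. Hence $g_A$ is strictly decreasing on $[0,\infty)$, with $g_A(0)=\tfrac12\psi_1(c_A(0),0)=A/2$ and $\lim_{d\to\infty}g_A(d)=0$, the latter because $g_A(d)=\int_0^1\frac{t}{e^{c_A(d)+dt}-1}\,dt\le\int_0^1\frac{t}{e^{dt}-1}\,dt=d^{-2}\int_0^{d}\frac{u}{e^{u}-1}\,du\le\tfrac{\pi^2}{6}d^{-2}$ (using $\int_0^\infty u(e^{u}-1)^{-1}\,du=\pi^2/6$). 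Thus $g_A\colon[0,\infty)\to(0,A/2]$ is a decreasing bijection, and for $B\in(0,A/2)$ I set $d:=g_A^{-1}(B)>0$ and $c:=c_A(d)>0$, the unique solution with $c,d>0$. Monotonicity then follows: as $B$ decreases from $A/2$ to $0$, $d$ increases strictly from $0$ to $\infty$ and $c=c_A(d)$ decreases strictly from $\log\tfrac{A+1}{A}$ to $\lim_{d\to\infty}c_A(d)=0$, the last limit since the closed form of~\eqref{eq:integrals A} gives $e^{c_A(d)}-1=(e^{d}-1)/\big(e^{d}(e^{Ad}-1)\big)\to0$.

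For the limit $A\to\infty$ with $B$ fixed I exploit the two-sided bound $\tfrac{1}{2(e^{c+d}-1)}\le\psi_2(c,d)\le\tfrac{\pi^2}{6}d^{-2}$ (from $\psi_2(c,d)=\int_0^1\frac{t}{e^{c+dt}-1}\,dt$, monotone in $c+dt$, with $\int_0^{d}u(e^{u}-1)^{-1}\,du\le\pi^2/6$). In particular $d\mapsto\psi_2(0,d)$ is continuous, strictly decreasing (its integrand decreases in $d$), and runs from $+\infty$ to $0$, so the equation $B=\psi_2(0,d)$ --- obtained by putting $c=0$ in~\eqref{eq:integrals B} --- has a unique positive root $d^{*}$. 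Now let $A_k\to\infty$ with solutions $(c_k,d_k)$: since $\psi_1(c_k,d_k)=A_k\to\infty$ and $\psi_1(c,d)\le1/(e^{c}-1)$, we get $c_k\to0$; and $\psi_2(c_k,d_k)=B$ together with the bound gives $d_k\le\pi/\sqrt{6B}$ and, via $c_k\to0$, eventually $d_k\ge\tfrac12\log(1+\tfrac1{2B})>0$. Any subsequential limit $\bar d\in(0,\infty)$ then satisfies $\psi_2(0,\bar d)=B$ by continuity, hence $\bar d=d^{*}$; so $d(A,B)\to d^{*}$ and $c(A,B)\to0$.

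The only genuinely delicate part will be this package of limit arguments --- that $\psi_1$ blows up \emph{only} as $c\to0$ while $\psi_2$ stays comparable to $d^{-2}$, captured by the elementary estimates $\psi_1\le1/(e^{c}-1)$ and $\tfrac{1}{2(e^{c+d}-1)}\le\psi_2\le\tfrac{\pi^2}{6}d^{-2}$, and then using these in the $A\to\infty$ regime to trap $d_k$ in a fixed compact subset of $(0,\infty)$ before passing to the limit. All sign bookkeeping for uniqueness and monotonicity is handed to us by Lemma~\ref{lem:J}, and the closed forms already in the statement make the remaining algebra routine.
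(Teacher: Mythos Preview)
Your proof is correct and follows essentially the same route as the paper: uniqueness via the negative-definiteness line-integral argument from Lemma~\ref{lem:J}, existence by solving $\psi_1=A$ for $c=c_A(d)$ and showing $g_A(d)=\psi_2(c_A(d),d)$ decreases from $A/2$ to $0$ via the identity $g_A'(d)=\det J/J_{A,c}<0$. Your treatment of the $A\to\infty$ limit is actually more careful than the paper's, which simply asserts $c\to0$ from the closed form and substitutes; you give an explicit compactness argument trapping $d_k$ via the bounds $\tfrac{1}{2(e^{c+d}-1)}\le\psi_2\le\tfrac{\pi^2}{6}d^{-2}$ before passing to the limit, and you also correctly compute $\lim_{d\to\infty}c_A(d)=0$ (the ``to $1$'' in the statement is a typo).
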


\begin{proof}
Solving Equation~\eqref{eq:integrals A} for $c$ (assuming $d\geq 0$) gives 
\[ c=\log \left( \frac{e^{(A+1)d}-1}{e^{(A+1)d}-e^d}\right). \]
Substituting this into Equation~\eqref{eq:integrals B} gives an explicit expression for $B$ in terms of $A$ and $d$, and shows that for fixed $A>0$ as $d$ goes from 0 to infinity $B$ goes from $A/2$ to 0. By continuity, this implies the existence of the desired $c$ and $d$. It also shows that,  for a fixed $A$, $c$ is a decreasing function of $d$ with the given maximal and minimal values as $d$ goes from $0$ to $\infty$.

To prove uniqueness, we note that for $\xx,\yy \in \mathbb{R}^2$ Stokes' theorem implies
\[ \psi(\yy) - \psi(\xx) = \int_0^1 D[\psi]\left( t\xx + (1-t)\yy \right) \cdot (\xx - \yy) \, dt  \]
so that
\[ (\xx-\yy)^T \cdot \left(\psi(\yy) - \psi(\xx)\right) = \int_0^1 \left[(\xx-\yy)^T \cdot D[\psi]\left( t\xx + (1-t)\yy \right) \cdot (\xx - \yy)\right] dt.  \]
When $\xx \neq \yy$, negative-definiteness of $D[\psi]$ implies that the last integrand is strictly negative on $[0,1]$, and $\psi(\yy) \neq \psi(\xx)$. Thus, distinct values of $c$ and $d$ give distinct values of $A$ and $B$.

To see the monotonicity, let $A$ be fixed and let $F_B(d)=B$ be the equation obtained after substituting $c=c(A,d)$ above in Equation~\eqref{eq:integrals B}, i.e. $F_B(d) = \psi_2(c(A,d),d)$. Then $d$ is a decreasing function of $B$ and vice versa since
\[ \frac{\partial F_B(d)}{\partial d} = \frac{J_{B,d}J_{A,c} - J_{A,d}J_{B,c}}{J_{A,c}} = \frac{\det D[\psi]}{J_{A,c}} <0\, . \]

For the last part, the explicit formula for $c$ in terms of $A$ and $d$ shows that $c\to 0$. Substitution in Equation~\eqref{eq:integrals B} gives the desired equation.

\end{proof}

\section{Proof of Theorem~\protect{\ref{th:main}} from the discretized result}
\label{sec:implication}

Here we show how $c_m$ and $d_m$ from the discretized result are related to $c,d$ defined independently of $m$. The proof below also shows that $c_m$ and $d_m$ exist and are unique.

The Euler-MacLaurin summation formula~\cite[Section 3.6]{Bruijn1958} gives an expansion
\begin{align}
\frac{L_m}{m} &= \int_0^1 \log(1-e^{-c_m-d_mt})\, dt + \frac{\log(1-e^{-c_m})+\log(1-e^{-c_m-d_m})}{2m} + O(m^{-2}) \notag \\[+2mm]
&=  \frac{\dilog(1 - e^{-c_m-d_m}) - \dilog(1-e^{-c_m})}{d_m} + \frac{\log(1-e^{-c_m})+\log(1-e^{-c_m-d_m})}{2m} + O(m^{-2}) \label{eq:Lm1}
\end{align}
of the sum $L_m$ in terms of $c_m$ and $d_m$. Assume that there is an asymptotic expansion
\begin{eqnarray} 
c_m & = & c + u m^{-1} + O(m^{-2}) \label{eq:c+} \\
d_m & = & d + v m^{-1} + O(m^{-2}) \label{eq:d+}
\end{eqnarray}
as $m\rightarrow\infty$, where $u$ and $v$ are constants depending only on $A$ and $B$. Under such an assumption, substitution of Equations~\eqref{eq:c+} and~\eqref{eq:d+} into Equation~\eqref{eq:Lm1} implies
\begin{align}
\label{eq:L+}
\frac{L_m}{m} &=  \frac{\dilog(1 - e^{-c-d}) - \dilog(1-e^{-c})}{d} + \frac{u A + v B}{m} + O(m^{-2}) \notag \\
&= \log(1-e^{-c-d}) - dB + \frac{u A + v B}{m} + O(m^{-2}).
\end{align}
Substituting Equations~\eqref{eq:c+}--\eqref{eq:L+} into Equation~\eqref{eq:N'} of Theorem~\ref{th:1'} and taking the limit as $m \rightarrow \infty$ then gives Theorem~\ref{th:main}, as
\[ 
\Delta_m\rightarrow \left(\int_0^1 \frac{e^{-c-dt}}{(1-e^{-c-dt})^2}\,dt\right)\left(\int_0^1 \frac{t^2e^{-c-dt}}{(1-e^{-c-dt})^2}\,dt\right) - \left(\int_0^1 \frac{te^{-c-dt}}{(1-e^{-c-dt})^2}\,dt\right)^2 = \Delta. 
\]

It remains to show the expansions in Equations~\eqref{eq:c+} and~\eqref{eq:d+}. For $x,y>0$, define 
\begin{align*}
\Sbar_m(x,y) & := \frac{1}{m} \sum_{j=0}^m \frac{1}{1 - e^{-(x+yj/m)}} - 1 \, , \\
\Tbar_m(x,y) & := \frac{1}{m} \sum_{j=0}^m \frac{j/m}{1 - e^{-(x+yj/m)}} - \frac{1}{2}  \, .
\end{align*}
Another application of the Euler-MacLaurin summation formula implies
\begin{align}
\Sbar_m(c,d) &=  A + A_1 (c,d) m^{-1} + O(m^{-2}) \, , \label{eq:A1} \\
\Tbar_m(c,d) &=  B + B_1 (c,d) m^{-1} + O(m^{-2}) \, , \label{eq:B1}
\end{align}
with
\[ A_1 = \frac{1}{2}\left(\frac{1}{1-e^{-c}} + \frac{1}{1-e^{-c-d}}\right) \qquad \text{and} \qquad B_1 = \frac{1}{2(1-e^{-c-d})}. \]
Let $\J$ denote the Jacobian $D[\psi]$ of the map $\psi$, introduced in Lemma~\ref{lem:J}, with respect to $c$ and $d$, and let
\[ (c_m' , d_m') = (c,d) - m^{-1} \J^{-1} \cdot (A_1-1 , B_1-1/2)^T.\]
A Taylor expansion around the point $(c,d)$ gives
\begin{align*}
\begin{pmatrix}
\Sbar_m(c_m',d_m') \\
\Tbar_m(c_m',d_m')
\end{pmatrix}
&= \begin{pmatrix}
\Sbar_m(c,d) \\
\Tbar_m(c,d)
\end{pmatrix} - \left(\J + O\left(m^{-1}\right) \right) \cdot \left(m^{-1} \J^{-1} \begin{pmatrix}
A_1 \\
B_1
\end{pmatrix}
\right) + O(m^{-2}) \\
&= 
\begin{pmatrix}
A-1/m \\
B-1/2m
\end{pmatrix} + O\left(m^{-2}\right) \\
&= 
\begin{pmatrix}
\Sbar_m(c_m,d_m) \\
\Tbar_m(c_m,d_m)
\end{pmatrix} + O\left(m^{-2}\right),
\end{align*} 
where Equations~\eqref{eq:A1} and~\eqref{eq:B1} were used to approximate the Jacobian of $\psi_m: (x,y) \mapsto \left(\Sbar_m(x,y),\Tbar_m(x,y)\right)$ with respect to $x$ and $y$.

The map $\psi_m$ is Lipschitz for a similar reason as its continuous analogue. Namely, consider the partial derivatives
\begin{eqnarray*}
J_{S,x} & = & \frac{1}{m}\sum_{j=0}^m \  -\frac{e^{-x-yj/m}}{(1-e^{-x-yj/m})^2}\, \\
J_{S,y} & = & \frac{1}{m^2}\sum_{j=0}^m \  -\frac{j\ e^{-x-yj/m}}{(1-e^{-x-yj/m})^2}\, \\
J_{T,x} & = & \frac{1}{m^2}\sum_{j=0}^m \  -\frac{j\ e^{-x-yj/m}}{(1-e^{-x-yj/m})^2}\, \\
J_{S,y} & = & \frac{1}{m^3}\sum_{j=0}^m \  -\frac{j^2\ e^{-x-yj/m}}{(1-e^{-x-yj/m})^2}.
\end{eqnarray*}
Let $\rho_m$ be a discrete finite measure on $R_m:=\{0,1/m,2/m,\ldots,1\}$ with density $e^{-x-yt}/(1-e^{-x-yt})$ for $t\in R_m$ and 0 otherwise, and let $\E_{\rho_m}$ be the expectation with respect to $\rho_m$. Then 
\[ J_{S,x} = \E_{\rho_m}[-1] \, , \qquad J_{T,x}=J_{S,y} = \E_{\rho_m}[-t] \, , \qquad J_{T,y} =\E_{\rho_m}[-t^2]\]
and
\[ \det D[\psi_m] = \E_{\rho_m}[1]\E_{\rho_m}[t^2] - \E_{\rho_m}[t]^2 = \E_{\rho_m}[1]^2 \Var_{\sigma_m} [t]\, ,\]
where $\sigma_m$ is the probability function $\rho_m/\E_{\rho_m}[1]$. For any fixed $m$ and $(x,y)$ in a compact neighborhood of $(A,B)$, both the variance and the expectation are finite and bounded away from 0, as is the Jacobian determinant. Moreover, the trace ${\rm Tr} D[\psi] = -\E_{\rho_m}[1+t^2]$ is  bounded away from 0 and infinity, so the Jacobian is negative definite with locally bounded eigenvalues, and hence $\psi_m$ is locally Lipschitz.
 Since the norm of the Jacobian is bounded away from 0 and infinity, we have that the inverse map $\psi_m^{-1}$ is also locally Lipschitz in a neighborhood of $\psi^{-1}(A,B)$.  Moreover, similarly to proof of existence and uniqueness of $c$ and $d$ in Section~\ref{sec:constants}, we have that there indeed are $c_m$ and $d_m$ as unique solutions of Equations~\eqref{eq:mu_m} and~\eqref{eq:nu_m} since the Jacobian is negative semi-definite.  

The trapezoid formula implies $\left|J_{S,c}-J_{A,c}\right| = O(m^{-1})$, and similar bounds for the other differences of partial derivatives in the continuous and discrete settings.  Hence, the bounds for the norms and eigenvalues of $D[\psi_m]$ are within $O(m^{-1})$ of the ones for $D[\psi]$, and $\psi_m$ (and its inverse) is Lipschitz with a constant independent of $m$. Thus,
\[ O(m^{-2})=\|\psi_m(c'_m,d'_m) -\psi_m(c_m,d_m)\|\geq C^{-1} \| (c_m'-c_m,d'_m-d_m)\| \]
for some constant $C$, so that the expansions~\eqref{eq:c+} and~\eqref{eq:d+} hold. 
$\Cox$

\section{Proof of Theorem~\protect{\ref{th:difference}}}
\label{sec:proof}
We will prove Theorem~\ref{th:difference} from Equation~\eqref{eq:P_m} and Corollary~\ref{cor:lclt_error}. Let $p_m(\ell,n) = \P_m \left [ (S_m,T_m) = (\ell , n) \right ]$ and let
\begin{align}\label{eq:discrete_funcs}
L_m(x,y) &:= \sum_{j=0}^m \log(1-e^{-x-yj/m}) \ ,\\
A_m(x,y) &:= \sum_{j=0}^m \frac{1}{1-e^{-x-yj/m}} - (m+1) \ ,\\
B_m(x,y) &:= \sum_{j=0}^m \frac{j/m}{1-e^{-x-yj/m}} - \frac{m+1}{2} \ .
\end{align}
Then $c_m$ and $d_m$ are the solutions to 
\[ A_m(c_m,d_m) = \ell= Am  \, , \qquad B_m(c_m,d_m) = n/m = Bm \, . \]
Let $c'_m,d'_m$ be the solutions to $A_m(c'_m,d'_m)=\ell$ and $B_m(c'_m,d'_m)=(n+1)/m$, and let $\Delta x = c'_m-c_m=O(m^{-2})$ and $\Delta y = d'_m-d_m=O(m^{-2})$ by the Lipschitz properties proven in Section~\ref{sec:constants}. Observe that 
\begin{equation}\label{eq:derivatives}
 \frac{\partial L_m(x,y) }{\partial x} = A_m(x,y) \quad  \text{and} \quad \frac{\partial L_m(x,y)}{\partial y} = B_m(x,y).
\end{equation}
Using the Taylor expansion for $L_m(c'_m,d'_m)$ around $(c_m,d_m)$ and the $L_m$ partial derivatives from Equation~\eqref{eq:derivatives},
\[ -L_m(c_m', d_m') = -L_m(c_m+\Delta x, d_m +\Delta y) = -L_m(c_m,d_m) - \Delta x \, A_m(c_m,d_m) - \Delta y \, B_m(c_m,d_y) + O(m^{-3}), \]
so that
\begin{align*}
-L_m(c_m', d_m') + (c_m+\Delta x) \ell + (d_m + \Delta y) (n + 1) \, m^{-1} =-L_m(c_m,d_m) + c_m\ell  + d_m(n+1) \, m^{-1}  + O(m^{-3}).
\end{align*}
To lighten notation, we now write $L_m := L_m(c_m,d_m)$ and $L_m' := L_m(c_m',d_m')$. Then
\begin{align}
N_{n+1}(\ell,m) - N_n(\ell,m) 
&= p_m(\ell,n+1)\exp \left[ -L_m' + c_m' \ell + \frac{d_m'}{m}(n+1) \right] - p_m(\ell,n)\exp \left[ -L_m + c_m \ell + \frac{d_m}{m}n \right] \notag \\[+2mm]
&= p_m(\ell,n)\exp \left[ -L_m + c_m \ell + \frac{d_m}{m}n \right]\left[e^{d_m/m}-1\right] \label{eq:diffE1} \\[+1mm]
& \quad + \, \left[p_m(\ell,n+1)-p_m(\ell,n)\right]\exp \left[ -L_m + c_m \ell + \frac{d_m}{m}(n+1) \right] \label{eq:diffE2} \\[+1mm]
& \quad + \, p_m(\ell,n+1)\left( e^{-L_m' + c_m' \ell +d_m'(n+1)/m} -  e^{-L_m + c_m \ell + d_m(n+1)/m} \right). \label{eq:diffE3}
\end{align}
We now bound each of these summands.
\begin{itemize}
	\item Since $d_m = d + O(m^{-1})$, Equation~\eqref{eq:P_m} implies that the quantity on line~\eqref{eq:diffE1} equals 
	\[ N_n(\ell,m)\left(\frac{d}{m} + O(m^{-2}) \right) \]
	as long as $d \notin O(m^{-1})$. This holds when $|A-B/2| \notin O(m^{-1})$ as $d=0$ when $A=B/2$ and the map taking $(A,B)$ to $(c,d)$ is Lipschitz. 

	\item By Corollary~\ref{cor:lclt_error}, 
	\begin{align*} 
	\left[p_m(\ell,n+1)-p_m(\ell,n)\right] &\leq \left|\mN_m(\ell,n+1)-\mN_m(\ell,n)\right| + O(m^{-4}) \\
	&=  O\left(m^{-2} \cdot \left|1 - e^{\frac{1}{2} Q_m (0,1)} \right| \right) + O(m^{-4}) \\
	&= O(m^{-4}),
	\end{align*}
	where $Q_m$ is the inverse of the covariance matrix of $(S_m,T_m)$. Thus, the quantity on line~\eqref{eq:diffE2} is $O(m^{-4} \cdot m^2 N_n(\ell,m)) = O(m^{-2}N_n(\ell,m))$.

	\item Let 
	\[ \psi_m := \exp\bigg[ -L_m' + c_m' \ell +d_m'(n+1) \, m^{-1} - \left(-L_m + c_m \ell +d_m(n+1) \, m^{-1}\right) \bigg] - 1 = O(m^{-3}). \]
	As $p_m(\ell,n+1) = p_m(\ell,n) + O(m^{-4})$, it follows that the quantity on line~\eqref{eq:diffE3} is
	\begin{align*}
	p_m(\ell,n+1) \, e^{-L_m + c_m \ell + d_m(n+1)/m} \, \psi_m &= N_n(\ell,m) \, \psi_m \, e^{d_m/m} + O(m^{-4} \, e^{d_m/m} \, e^{-L_m + c_m \ell + d_m n/m} \, \psi_m) \\
	&= O(m^{-3} N_n(\ell,m)).
	\end{align*}
\end{itemize} 

Putting everything together, 
\[ N_{n+1}(\ell,m) - N_n(\ell,m) = N_n(\ell,m)\left(\frac{d}{m} + O(m^{-2}) \right), \]
as desired.
$\Cox$

\section*{Appendix: Proof of the Local Central Limit Theorem}

Throughout this section, $1/2 \geq \delta > 0$ is fixed and $\{ p_j : 0 \leq j \leq m \}$ are arbitrary numbers in $[\delta , 1-\delta]$. The variables $\{ X_j \}$ and $(S_m, T_m)$ are as in Lemma~\ref{lem:lclt}; we drop the index $m$ on the remaining quantities $\alpha_m, \beta_m, \gamma_m, \Delta_m, \mu_m, \nu_m, p_m(a,b)$ and the matrices $M_m$ and $Q_m$. Recall the quadratic form notation $M(s,t):= [s\, , \,\, t] \, M\, [s\,,\,\,t]^T$.

\begin{lem} \label{lem:delta}
The constants $\alpha, \beta, \gamma$ and $\Delta$ are bounded below and above by positive constants depending only on $\delta$.
\end{lem}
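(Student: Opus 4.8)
The plan is to reduce the entire statement to two elementary facts. First, since $p_j \in [\delta, 1-\delta]$ and hence also $q_j = 1 - p_j \in [\delta, 1-\delta]$, every individual variance $v_j := \Var(X_j) = q_j/p_j^2$ lies in the fixed interval $[c_1, c_2]$, where $c_1 := \delta/(1-\delta)^2$ and $c_2 := (1-\delta)/\delta^2$ depend only on $\delta$. Second, the power sums $\sum_{j=0}^m j^k$ for $k = 0, 1, 2$ equal $m+1$, $\tfrac{1}{2}m(m+1)$ and $\tfrac{1}{6}m(m+1)(2m+1)$ respectively, and for every $m \geq 1$ these are sandwiched between fixed positive multiples of $m$, $m^2$ and $m^3$. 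Writing $\alpha_m m = \sum_j v_j$, $\beta_m m^2 = \sum_j j\, v_j$ and $\gamma_m m^3 = \sum_j j^2 v_j$, and dividing the sandwiched power sums by the appropriate power of $m$, one obtains bounds such as $\alpha_m \in [c_1, 2c_2]$, $\beta_m \in [c_1/2, c_2]$ and $\gamma_m \in [c_1/3, c_2]$ valid for all $m \geq 1$. This disposes of $\alpha$, $\beta$ and $\gamma$.

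For $\Delta$ the upper bound is immediate from $\Delta_m = \alpha_m\gamma_m - \beta_m^2 \leq \alpha_m\gamma_m$ together with the bounds just obtained, so the only real content is a positive lower bound depending only on $\delta$. The key step is the Lagrange identity for the determinant of the covariance matrix of a sum of independent variables: expanding $\Var(S_m) = \sum_i v_i$, $\Var(T_m) = \sum_j j^2 v_j$ and $\Cov(S_m, T_m) = \sum_i i\, v_i$ and collecting terms gives
\[ \det M_m = \Var(S_m)\,\Var(T_m) - \Cov(S_m, T_m)^2 = \frac{1}{2}\sum_{i,j=0}^m v_i v_j\,(i-j)^2. \]
Using $v_i v_j \geq c_1^2$ and the elementary evaluation $\sum_{i,j=0}^m (i-j)^2 = \tfrac{1}{6}m(m+1)^2(m+2) \geq \tfrac{1}{6}m^4$, this yields $\det M_m \geq \tfrac{c_1^2}{12} m^4$, i.e.\ $\Delta_m = m^{-4}\det M_m \geq c_1^2/12 > 0$. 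The same identity with $v_i v_j \leq c_2^2$ (and $\tfrac{1}{6}m(m+1)^2(m+2) \leq 2 m^4$ for $m \geq 1$) re-derives a uniform upper bound $\Delta_m \leq c_2^2$ from the same computation, so in fact all four constants are handled at once once $v_j \in [c_1,c_2]$ is in hand.

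I do not expect a genuine obstacle here — the lemma is essentially bookkeeping. The two places to exercise a little care are: making sure the implied constants in the power-sum estimates are taken uniformly over all $m \geq 1$ (equivalently, for all large $m$, which is all that is needed downstream); and recording the Lagrange identity with the correct $(i-j)^2$ weight, since it is exactly this weight, combined with the lower bound $v_j \geq c_1$, that forces $\det M_m$ to grow like $m^4$ rather than merely like $m^3$ and hence keeps $\Delta_m$ bounded away from $0$. Everything else is arithmetic in $\delta$.
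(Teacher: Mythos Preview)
Your proof is correct, and for the lower bound on $\Delta$ it takes a genuinely different route from the paper. The paper bounds $\Delta = \det \Mt$ from below via the smallest eigenvalue of $\Mt$: since $\Mt$ is positive definite, $\det \Mt \geq \lambda_{\min}^2$, and $\lambda_{\min}$ is the infimum of the Rayleigh quotient $\Mt(\cos\theta,\sin\theta) = m^{-1}\Var\!\left(\cos\theta\, S + m^{-1}\sin\theta\, T\right)$, which it then bounds below by a trigonometric computation. You instead expand $\det M_m$ directly via the Lagrange identity
\[
\det M_m \;=\; \Bigl(\sum_i v_i\Bigr)\Bigl(\sum_j j^2 v_j\Bigr) - \Bigl(\sum_i i\,v_i\Bigr)^2
\;=\; \tfrac12\sum_{i,j} v_i v_j\,(i-j)^2,
\]
and then use $v_iv_j \geq c_1^2$ together with the exact evaluation $\sum_{i,j=0}^m (i-j)^2 = \tfrac{1}{6}m(m+1)^2(m+2)$. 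This is more elementary and yields the explicit constant $\Delta_m \geq c_1^2/12$ without any spectral argument; it also sidesteps a small slip in the paper's write-up (the identification of $\lambda_{\min}$ with $\inf_\theta \Mt(\cos\theta,\sin\theta)$ is correct, but the text writes $|\lambda|^2$ there). The Rayleigh-quotient approach has the minor advantage of simultaneously bounding the smallest eigenvalue, which could be useful if one later needed operator-norm control of $Q_m$, but for the lemma as stated your argument is at least as clean.
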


\noindent{\sc Proof:}
Upper and lower bounds on $\alpha, \beta$ and $\gamma$ are elementary:
$\disp \alpha \in \left [ \frac{\delta}{(1-\delta)^2}, \frac{(1-\delta)}{\delta^2} \right ] , \beta \in \left [ \frac{\delta}{2(1-\delta)^2}, \frac{(1-\delta)}{2\delta^2} \right ]$
and
$\disp \gamma \in \left [ \frac{\delta}{3(1-\delta)^2}, \frac{(1-\delta)}{3\delta^2} \right ]$.
The upper bound on $\Delta$ follows from these.

For the lower bound on $\Delta$, let 
$\disp {\Mt} = \left ( \begin{array}{cc} \alpha_n & \beta_n \\ \beta_n & \gamma_n \end{array} \right )$ 
denote $M$ without the factors of $m$. We show $\Delta$ is bounded from below by the positive constant $(4 - \sqrt{13}) \delta / 6$. A lower bound for the determinant $\Delta$ of $\Mt$ is $|\lambda|^2$ where $\lambda$ is the least modulus eigenvalue of $\Mt$; note that $|\lambda|^2 = \inf_\theta \Mt (\cos \theta , \sin \theta)$.  We compute
\begin{align*}
\Mt (\cos \theta , \sin \theta) & = m^{-1} \E \left (\cos \theta S + m^{-1} \sin \theta T \right )^2 \\
& \geq \delta m^{-1} \sum_{k=0}^m \left ( \cos \theta + \frac{k}{m} \sin \theta \right )^2 \\
& > \delta \cdot \left ( \cos^2 \theta + \cos \theta \sin \theta + \frac{1}{3} \sin^2 \theta \right ) \, .
\end{align*}
This is at least $\disp \frac{4 - \sqrt{13}}{6} \delta$ for all $\theta$, proving the lemma.
$\Cox$

\begin{lem} \label{lem:unif}
Let $X_p$ denote a reduced geometric with parameter $p$. For every $\delta \in (0,1/2)$ there is a $K$ such that simultaneously for all $p \in [\delta , 1-\delta]$,
\[ \left | \log \E \exp(i \lambda X_p) - \left ( i \frac{q}{p} \lambda - \frac{q^2}{2 p^2} \lambda^2 \right ) \right | \leq K \lambda^3 \, . \]
\end{lem}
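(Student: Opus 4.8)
The plan is to establish the claimed uniform cubic Taylor estimate for the cumulant generating function of a reduced geometric by working directly with the explicit moment generating function. A reduced geometric $X_p$ with parameter $p$ (so $X_p = G - 1$ where $G$ is geometric on $\{1,2,\dots\}$ with success probability $p$) has $\E \exp(i\lambda X_p) = \frac{p e^{-i\lambda}}{1 - q e^{i\lambda}} \cdot \frac{1}{?}$—more precisely, since $\P(X_p = k) = p q^k$ for $k \geq 0$, we get $\E \exp(i\lambda X_p) = \sum_{k \geq 0} p q^k e^{i\lambda k} = \frac{p}{1 - q e^{i\lambda}}$. First I would write $\phi(\lambda) := \log \E \exp(i\lambda X_p) = \log p - \log(1 - q e^{i\lambda})$, which is analytic in a complex neighborhood of $\lambda = 0$ that can be taken uniform in $p \in [\delta, 1-\delta]$: indeed $1 - q e^{i\lambda}$ stays bounded away from $0$ as long as $|\lambda|$ is small (depending only on $\delta$), since at $\lambda = 0$ it equals $p \geq \delta$.

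Next I would compute the first three derivatives of $\phi$ at $0$ and match them against the claimed polynomial. We have $\phi'(\lambda) = \frac{i q e^{i\lambda}}{1 - q e^{i\lambda}}$, so $\phi'(0) = \frac{iq}{p} = i\frac{q}{p}$, matching the linear term $i\frac{q}{p}\lambda$. Differentiating again, $\phi''(0) = -\frac{q}{p^2}$ (using $q + q^2/p \cdot \text{correction}$—the standard fact that the variance of a reduced geometric is $q/p^2$, and $\phi''(0) = -\Var(X_p)$), wait: actually the second cumulant is the variance $q/p^2$, so $\phi''(0) = i^2 \cdot \frac{q}{p^2} = -\frac{q}{p^2}$. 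Hmm, but the claimed quadratic coefficient is $-\frac{q^2}{2p^2}$, giving $\phi''(0) = -\frac{q^2}{p^2}$. Let me not resolve this arithmetic here; the point of the proof is that whatever the correct second-order term is, it is a bounded smooth function of $p$ on $[\delta, 1-\delta]$. (I would simply take the statement's polynomial as the second-order Taylor polynomial of $\phi$ and verify the coefficients by direct differentiation; if the intended polynomial is the genuine truncated Taylor series, the verification is routine.)

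The core estimate then comes from the Taylor remainder. Since $\phi$ extends holomorphically to a disc $|\lambda| < r$ with $r = r(\delta) > 0$ and is bounded there by a constant $C = C(\delta)$ (because $1 - q e^{i\lambda}$ is bounded away from $0$ and $\infty$ uniformly), Cauchy's estimates give $|\phi'''(\zeta)| \leq M(\delta)$ for all $|\zeta| \leq r/2$, uniformly in $p \in [\delta, 1-\delta]$. Taylor's theorem with the Lagrange-type (or integral) form of the remainder then yields, for real $|\lambda| \leq r/2$,
\[
\left| \phi(\lambda) - \phi(0) - \phi'(0)\lambda - \tfrac{1}{2}\phi''(0)\lambda^2 \right| \leq \frac{M(\delta)}{6} |\lambda|^3 \leq \frac{M(\delta)}{6} \lambda^2 \cdot |\lambda|,
\]
and since $\phi(0) = \log p + \log(1) = 0$ wait $\phi(0) = \log p - \log(1-q) = \log p - \log p = 0$, this is exactly the claimed bound with $K = M(\delta)/6$ (for $|\lambda|$ bounded; for larger $\lambda$ the bound is trivial after possibly enlarging $K$, but the lemma is only needed for small $\lambda$ in the LCLT application, so I would state it for $|\lambda|$ below a threshold or simply note that $K$ absorbs the constant). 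The main obstacle—really the only thing requiring care—is making the radius $r$ and the bound $C$ genuinely uniform in $p$: this is where $\delta$ enters, ensuring $|1 - q e^{i\lambda}| \geq |1 - q| - q|e^{i\lambda} - 1| \geq \delta - (1-\delta)|\lambda|$ stays positive for $|\lambda| < \delta/(1-\delta)$, hence one may take $r = \delta/(2(1-\delta))$ and bound $\phi$ accordingly. Everything else is the standard complex-analytic remainder argument.
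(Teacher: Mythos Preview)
Your approach is correct and essentially a more explicit version of the paper's argument. The paper's proof is two lines: for fixed $p$, Taylor's remainder theorem (using that the characteristic function is thrice differentiable) gives a constant $K(p)$; this constant is continuous in $p$ on $(0,1)$, hence bounded on the compact interval $[\delta,1-\delta]$. You instead write down the explicit formula $\phi_p(\lambda)=\log p-\log(1-qe^{i\lambda})$, observe holomorphy on a disc of radius depending only on $\delta$, and invoke Cauchy estimates to bound $\phi_p'''$ uniformly before applying Taylor. Both routes arrive at the same place; yours is constructive (it produces an explicit $K(\delta)$), while the paper's compactness argument is shorter but nonconstructive.

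Two remarks. First, you correctly flag that the quadratic coefficient in the statement should be $-\tfrac{q}{2p^2}\lambda^2$ rather than $-\tfrac{q^2}{2p^2}\lambda^2$, since $\Var(X_p)=q/p^2$; indeed the paper uses $q_j/p_j^2$ when it applies the lemma, so this is a typo in the statement. Second, your worry about the range of $\lambda$ is unnecessary: for \emph{real} $\lambda$ one has $|1-qe^{i\lambda}|\geq 1-q=p\geq\delta$, so $\phi_p$ is analytic on all of $\R$ with third derivative bounded uniformly in $(\lambda,p)\in\R\times[\delta,1-\delta]$. Hence the Taylor remainder bound holds for every real $\lambda$, not just small ones, and no separate argument for large $\lambda$ is needed.
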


\noindent{\sc Proof:}
For fixed $p$ this is Taylor's remainder theorem together with the fact that the characteristic function $\phi_p (\lambda)$ of $X_p$ is thrice differentiable.  The constant $K(p)$ one obtains this way is continuous in $p$ on the interval $(0,1)$, therefore bounded on any compact sub-interval.
$\Cox$

\noindent{\sc Proof of the LCLT:}
The proof of Lemma~\ref{lem:lclt} comes from expressing the probability as an integral of the characteristic function, via the inversion formula, and then estimating the integrand in various regions.

Let $\phi(s,t) := \E e^{i (s S + t T)}$ denote the characteristic function of $(S , T)$.  Centering the variables at their means, denote $\ov{S} := S - \mu$, $\ov{T} := T-\nu$, and $\ov{\phi}(s,t) := \E e^{i(s\ov{S} +t \ov{T})}$ so that $\phi(s,t) = \ov{\phi}(s,t) e^{is\mu + it\nu}$.  Then
\begin{align} 
p(a,b) & = \frac{1}{(2 \pi)^2} \int_{-\pi}^\pi \int_{-\pi}^\pi e^{-i s a - i t b} \phi (s,t) \, ds \, dt \nonumber \\
& = \frac{1}{(2 \pi)^2} \int_{-\pi}^\pi \int_{-\pi}^\pi e^{-i s (a-\mu) - i t (b-\mu)} \phihat (s,t) \, ds \, dt \, .  \label{eq:inversion}
\end{align}
Following the proof of the univariate LCLT for IID variables found in~\cite{Durrett2010}, we observe that
\begin{equation} \label{eq:normal}
\frac{1}{2 \pi (\det M)^{1/2}} e^{-\frac{1}{2} Q (a - \mu , b - \nu)}
   = \frac{1}{(2 \pi)^2} \int_{-\infty}^{\infty} \int_{-\infty}^{\infty} 
   e^{-i s (a-u) - i t (b-v)} 
   \exp \left ( - \frac{1}{2} M(s,t) \right ) \, ds \, dt \, .
\end{equation}
Hence, comparing this to~\eqref{eq:inversion} and observing that $e^{-is(a-\mu) - it(b-\nu)}$ has unit modulus, the absolute difference between $p(a,b)$ and the left-hand side of~\eqref{eq:normal} is bounded above by
\begin{equation} \label{eq:diff}
   \frac{1}{(2 \pi)^2}\int_{-\infty}^{\infty} \int_{-\infty}^{\infty} 
   \left | \one_{(s,t) \in [-\pi,\pi]^2} \ov{\phi} (s,t) 
   - e^{-(1/2) M(s,t)} \right | ds \, dt \, .
\end{equation}

Fix positive constants $L$ and $\ee$ to be specified later and decompose the region $\region := [-\pi,\pi]^2$ as the disjoint union $\regiona + \regionb + \regionc$, where
\begin{align*}
\regiona & = [-L m^{-1/2} , L m^{-1/2}] \times [-L m^{-3/2} , L m^{-3/2}] \\
\regionb & = [-\ee , \ee] \times [-\ee m^{-1} , \ee m^{-1}] 
   \;\; \setminus \; \regiona \\
\regionc & = \region \setminus (\regiona \cup \regionb) \, ;
\end{align*}
see Figure~\ref{fig:rectangles} for details.

\begin{figure}
\centering
\includegraphics[width=0.3\linewidth]{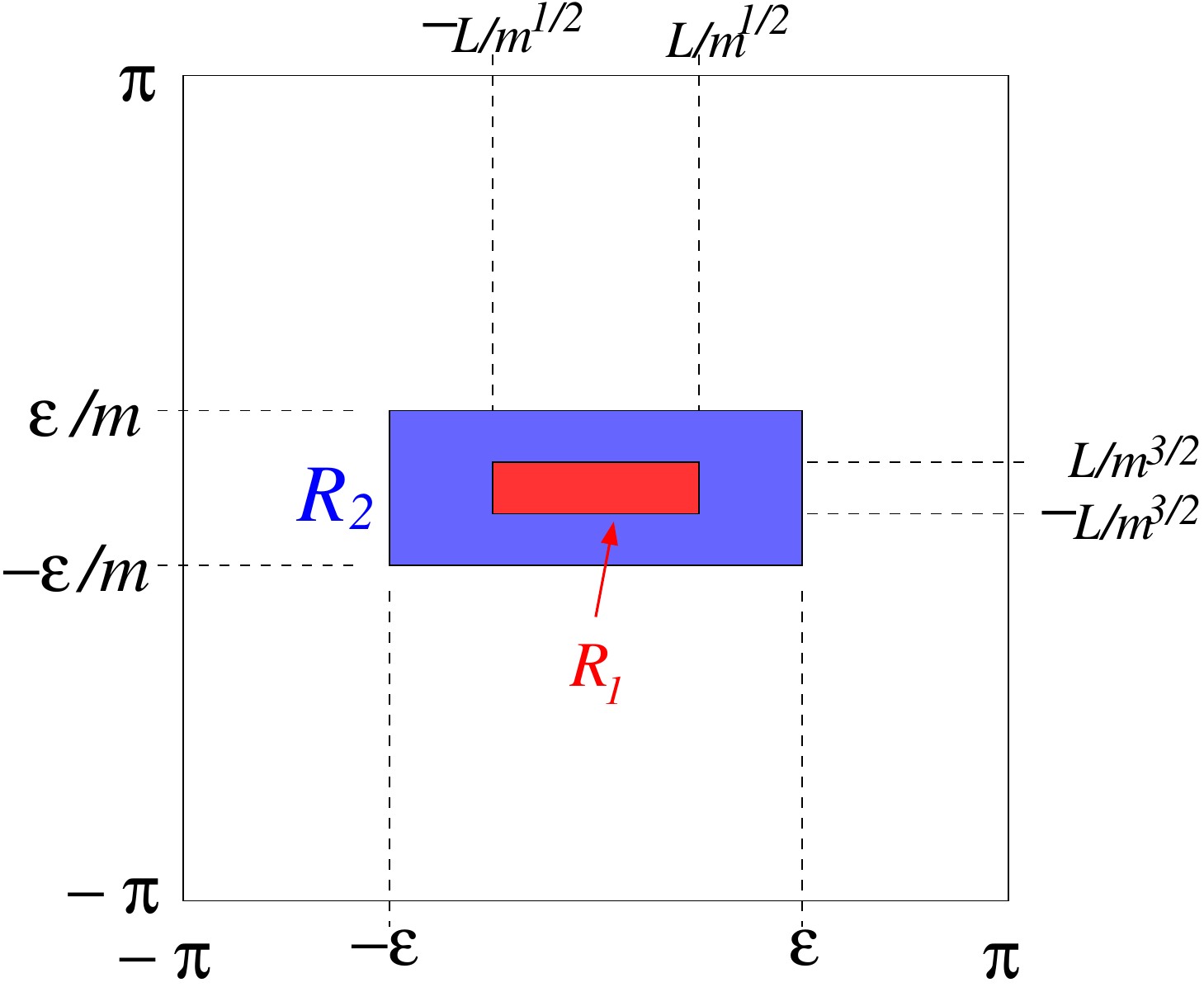}
\caption{The regions $\regiona \subseteq \regionb \subseteq \region$ in the proof of the LCLT.}
\label{fig:rectangles}
\end{figure}

As $\int_{\regionb^c} e^{- (1/2) M(s,t)} \, ds \, dt$ decays exponentially with $m$, it suffices to obtain the following estimates
\begin{align}
\int_{\regiona} \left | \phihat(s,t) - e^{-(1/2) M(s,t)} \right | \, ds \, dt
   & = O\left ( m^{-5/2} \right ) \label{eq:i}\\
\int_{\regionb} \left |  \phihat(s,t) - e^{-(1/2) M(s,t)} \right | \, ds \, dt
    & = O(m^{-5/2}) \label{eq:ii} \\
\int_{\regionc} \left | \phihat(s,t) \right | \, ds \, dt
   & = o \left ( m^{-3} \right ). \label{eq:iii}
\end{align}
By independence of $\{ X_j \}$, 
\[ \log \phihat (s,t) = \sum_{j=0}^m \log \E e^{i (s+jt) (X_j - \mu_j)} \, .\]
Using Lemma~\ref{lem:unif} with $p = p_j$ gives
\[ \left | \log \E e^{i (s+jt) (X_j - q_j/p_j)} + \frac{q_j}{2 p_j^2} (s+jt)^2 \right | \leq K |s+jt|^3 \, .\]
The sum of $(q_j / p_j^2) (s+jt)^2$ is $M(s,t)$, therefore summing the previous inequalities over $j$ gives
\begin{equation} 
\label{eq:R1}
\left | \log \phihat(s,t) + \frac{1}{2} M(s,t) \right | \leq K \sum_{j=0}^m |s + jt|^3 \, .
\end{equation}
On $\regiona$ we have the upper bound $|s + jt| \leq |s| + m |t| \leq 2 L m^{-1/2}$.  Thus,
\[ \sum_{j=0}^m |s+jt|^3 \leq (m+1) (8 L^3) m^{-3/2} = O \left ( m^{-1/2} \right ) \, .\]
Plugging this into~\eqref{eq:R1} and exponentiating shows that the left hand side of~\eqref{eq:i} is at most $|\regiona| \cdot O(m^{-1/2}) = O(m^{-5/2})$.

To bound the integral on $\regionb$, we define the sub-regions
\[ S_k := \left\{(x,y) : k \leq \max\left(m^{1/2}|x|,m^{3/2}|y|\right) \leq k+1 \right\}.\]
As the area of $S_k$ is $(8k+4)m^{-2}$,
\begin{align}
\int_{\regionb} \left |  \phihat(s,t) - e^{-(1/2) M(s,t)} \right | \, ds \, dt
&\leq \sum_{k=L}^{\lceil \, \epsilon \sqrt{m} \, \rceil} \int_{S_k}\left|\phihat(s,t)-e^{-M(s,t)/2}\right|dsdt \notag \\
&\leq m^{-2} \sum_{k=L}^{\lceil \, \epsilon \sqrt{m} \, \rceil} (8k+4) \max_{(s,t) \in S_k} \left|\phihat(s,t)-e^{-M(s,t)/2}\right| \,. \label{eq:R2m}
\end{align}
We break this last sum into two parts, and bound each part.  For $(s,t) \in \regionb$, we have $|s + j t| \leq |s| + m |t| \leq 2 \ee$ so that
\[ \sum_{j=0}^m |s + jt|^3 \leq 2 \ee \sum_{j=0}^m (|s| + j|t|)^2\leq (2 \ee \Delta^{-1}) M(|s|,|t|).\]  
Comparing this to~\eqref{eq:R1} shows we may choose $\ee$ small enough to guarantee that 
\[\left | \log \phihat(s,t) + \frac{1}{2} M(s,t) \right | \leq \frac{1}{4} M(|s|,|t|) \, , \]
so $|\phihat(s,t)| \leq e^{-(1/4) M(s,t)}$. Lemma~\ref{lem:delta} shows there is a positive constant $c$ such that the minimum value of $M(s,t)$ on $S_k$ is at least $ck^2$. Thus, for $(s,t) \in S_k$,
\[ \left|\phihat(s,t)-e^{-M(s,t)/2}\right| \leq \left|e^{-M(s,t)/4}\right| +  \left|e^{-M(s,t)/2}\right| \leq 2e^{-ck^2}.\]
If $r_m := \left\lceil \sqrt{(\log m)/c} \, \right\rceil$ then
\begin{align} 
\sum_{k=r_m}^{\infty} (8k+4)(k+1) \max_{(s,t) \in S_k} \left|\phihat(s,t)-e^{-M(s,t)/2}\right| 
&\leq  2\sum_{k=r_m}^{\infty} (8k+4)(k+1) e^{-ck^2} \notag \\
&= O(m^{-1} \, \text{polylog}(m)) \notag \\
&= O(m^{-1/2}),  \label{eq:R2m2}
\end{align}
where $\text{polylog}(m)$ denotes a quantity growing as an integer power of $\log m$. Furthermore, for $(s,t) \in S_k$ there exist constants $C$ and $C'$ such that
\[ \left|\log \phihat(s,t) + M(s,t)/2 \right| \leq C \sum_{j=0}^m \left|s+jt\right|^3 \leq C \left(2(k+1)m^{-1/2}\right)^3(m+1) = C'k^3m^{-1/2}.\]
This implies the existence of a constant $K>0$ such that for $0 \leq k \leq r_m$ and $(s,t) \in S_k$, 
\begin{align*}
\left|\phihat(s,t)-e^{-M(s,t)/2}\right|  
&= \left|e^{-M(s,t)/2}\right|\left|1-e^{\log \phihat(s,t)+ M(s,t)/2}\right|  \\
&\leq K e^{-ck^2} k^3m^{-1/2}.
\end{align*}
Thus,
\begin{align} 
\sum_{k=L}^{r_m} (8k+4)(k+1) \max_{(s,t) \in S_k} \left|\phihat(s,t)-e^{-M(s,t)/2}\right| 
&\leq  K m^{-1/2} \sum_{k=L}^{r_m} (8k+4)(k+1)k^3e^{-ck^2} \notag \\
&= O(m^{-1/2}). \label{eq:R2m3}
\end{align}
Combining~\eqref{eq:R2m}--\eqref{eq:R2m3} gives~\eqref{eq:ii}.

Finally, for~\eqref{eq:iii}, we claim there is a positive constant $c$ for which $|\phihat(s,t)| \leq e^{-cm}$ on $\regionc$.  To see this, observe (see~\cite[p. 144]{Durrett2010}) that for each $p$ there is an $\eta > 0$ such that $|\phi_p (\lambda)| < 1 - \eta$ on $[-\pi , \pi] \setminus [-\ee/2,\ee/2]$.  Again, by continuity, we may choose one such $\eta$ valid for all $p \in [\delta , 1-\delta]$.  It suffices to show that when either $|s|$ or $m |t|$ is at least $\ee$, then at least $m/3$ of the summands $\log \E e^{i(s+jt)(X_j - \mu_j)}$ have real part at most $-\eta$.  Suppose $s \geq \ee$ (the argument is the same for $s \leq -\ee$).  Interpreting $s+jt$ modulo $2\pi$ always to lie in $[-\pi,\pi]$, the number of $j \in [0,m]$ for which $s + jt \in [-\ee/2,\ee/2]$ is at most twice the number for which $s + jt \in [\ee/2,\ee]$, hence at most twice the number for which $s + jt \notin [-\ee/2,\ee/2]$; thus at least $m/3$ of the $m+1$ values of $s+jt$ lie outside $[-\ee/2,\ee/2]$ and these have real part of $\log \E e^{i(s+jt)(X_j-\mu_j)} \leq -\eta$ by choice of $\eta$.  Lastly, if instead one assumes $\pi \geq t \geq \ee/m$, then at most half of the values of $s+jt$ modulo $2\pi$ can fall inside any interval of length $\ee/2$.  Choosing $\eta$ such that the real part of $\log \E e^{i(s+jt)(X_j - \mu_j)}$ is at most $-\eta$ outside of $[-\ee/4,\ee/4]$ finishes the proof of~\eqref{eq:iii} and the LCLT. 
$\Cox$

\noindent{\sc Proof of Corollary~\ref{cor:lclt_error}.}
In order to estimate the error terms in the approximation of $p(a,b)$ we will consider the partial differences and repeat the approximation arguments above. Changing $b$ to $b+1$ in Equations~\eqref{eq:inversion} and~\eqref{eq:normal} implies 
\begin{equation}  \bigg|p(a,b+1)-p(a,b) - \big(\mN(a,b+1)-\mN(a,b)\big) \bigg| = \int_{[-\pi,\pi]^2} \left|1-e^{-it}\right| \left|\phihat(s,t) - e^{-1/2M(s,t)}\right|dsdt. \label{eq:pdiff} \end{equation}

For $(s,t) \in \regionc$, the proof of the LCLT shows that the integral in Equation~\eqref{eq:pdiff} decays exponentially with $m$. As $\left|1-e^{-it}\right| = \sqrt{2-2\cos(t)} \leq |t| = O(m^{-3/2})$ for $(s,t) \in \regiona$, the proof of the LCLT shows that the integral in Equation~\eqref{eq:pdiff} grows as $O(m^{-3/2} \cdot m^{-5/2}) = O(m^{-4})$. Finally, since $\left|1-e^{-it}\right| \leq |t| \leq (k+1)m^{-3/2}$ for $(s,t) \in S_k$ following the proof of the LCLT shows that
\begin{align*}
\int_{\regionb} \left|1-e^{-it}\right| \left|\phihat(s,t) - e^{-1/2M(s,t)}\right|dsdt 
& \leq m^{-7/2} \sum_{k=L}^{\lceil \, \epsilon \sqrt{m} \, \rceil} (8k+4)(k+1) \max_{(s,t) \in S_k} \left|\phihat(s,t)-e^{-M(s,t)/2}\right| \\
& = O(m^{-4}).
\end{align*}
$\Cox$

\section*{Acknowledgments}

We are very thankful to an anonymous referee of an earlier version.
This referee pointed out a body of literature that we had missed, 
in which the statistical mechanical ideas drawn on in this paper 
are already present.

\bibliographystyle{amsalpha}
\bibliography{bibl}

\end{document}